\newcounter{ipotesi}
 \makeatletter \@addtoreset{equation}{section}
\newtheorem{thm}{Theorem}[section]
\newtheorem{hyp}[thm]{Hypotheses}{\rm}
{\rm}
\newtheorem{prop}[thm]{Proposition}
\newtheorem{defi}[thm]{Definition}
\newtheorem{rmk}[thm]{Remark}{\rm}
\newcounter{parentenv}
\newcommand{\R}{{\mathbb R}}
\newcommand{\E}{{\mathbb E}}
\newcommand{\N}{{\mathbb N}}
\newcommand{\X}{{\mathcal{X}}}
\newcommand{\K}{{\mathcal{K}}}
\newcommand{\D}{{\mathcal{D}}}
\newcommand{\J}{{\mathcal{D}}}
\newcommand{\ra}{\rightarrow}
\newcommand{\ol}[1]{\overline{#1}}
\newcommand{\OO}{{\mathcal{O}}}
\newcommand{\CO}{\overline{\mathcal{O}}}
\newcommand{\Dom}{{\operatorname{Dom}}}
\newcommand{\set}[1]{{\left\{#1\right\}}}
\newcommand{\pa}[1]{{\left(#1\right)}}
\newcommand{\abs}[1]{{\left|#1\right|}}
\newcommand{\norm}[1]{{\left\|#1\right\|}}
\newcommand{\mi}[1]{{\lbrace #1(t,x)\rbrace_{t\geq 0}}}
\newcommand{\eqsys}[1]{{\left\{\begin{array}{ll}#1\end{array}\right.}}
\newcommand{\tc}{\, \middle |\,}
\begin{document}

\frenchspacing

\title[Schauder estimates for equations driven by colored noise]{Schauder estimates for stationary and evolution equations associated to stochastic reaction-diffusion equations driven by colored noise}

\author[D.A. Bignamini and S. Ferrari ]{Davide A. Bignamini, Simone Ferrari$^*$}

\address{D.A.B.: Dipartimento di Matematica, Universit\`a degli Studi di Pavia, Via Adolfo Ferrata 5, 27100 PAVIA, Italy}
\email{\textcolor[rgb]{0.00,0.00,0.84}{davideaugusto.bignamini@unipv.it}}

\address{S.F.:  Dipartimento di Matematica e Fisica ``Ennio De Giorgi'', Universit\`a del Salento, Via per Arnesano, 73100 LECCE, Italy}
\email{\textcolor[rgb]{0.00,0.00,0.84}{simone.ferrari@unisalento.it}}

\thanks{$^*$Corresponding author.}

\subjclass[2020]{35R15, 47D07, 60J35.}

\keywords{Evolution equations, infinite dimensional analysis, Schauder estimates, reaction-diffusion equations, stationary equations, Zygmund spaces.}

\date{\today}

\begin{abstract}
We consider stochastic reaction-diffusion equations with colored noise and prove Schauder type estimates, which will depend on the color of the noise, for the stationary and evolution problems associated with the corresponding transition semigroup, defined on the Banach space of bounded and uniformly continuous functions.
\end{abstract}

\maketitle

\section{Introduction}
The theory of Schauder regularity estimates for equations driven by differential operators with bounded coefficients was developed throughout the 20th century, see for example \cite{KCL75,KCL80,LSU68,LU68}.
In the late 1990s, a new interest in Schauder regularity estimates for stationary and evolution equations driven by differential operators with unbounded coefficients began to develop, see for instance \cite{CDP96-2, DA6, DL95, Lun97, Pri09}. Besides their obvious analytic interest one of the main motivation for this research is the relationship between second-order elliptic operators and stochastic differential equations, for example in problems such as uniqueness in law, pathwise uniqueness and uniqueness of the martingale problem for stochastic partial differential equations, see for instance \cite{DPF2010,DFPR13,Zam20}.


This paper is devoted to the study of Schauder regularity estimates for the stationary and evolution equations driven by the second order differential operator associated to a stochastic reaction-diffusion equation in a separable Banach space. Let $\mathcal{O}$ be a bounded set of $\R^d$, with $d=1,2,3$. Let $L^2(\mathcal{O})$ be the space of square integrable functions with respect to the $d$-dimensional Lebesgue measure on $\mathcal{O}$, with the usual quotient with respect to the equality almost everywhere, and let $C(\CO)$ be the space of continuous functions on $\ol{\mathcal{O}}$ endowed with the uniform norm. Let $A:\Dom(A)\subseteq C(\CO)\ra C(\CO)$ be a realization of the Laplacian operator with Dirichlet or Neumann boundary conditions. Let $F:C(\CO)\ra C(\CO)$ be a Nemytskii operator given by
\[
F(x)(\xi):=b(\xi,x(\xi)),\qquad \xi\in\ol{\mathcal{O}},\ x\in C(\CO),
\]
where $b:\CO\times \R\ra\R$ is a smooth enough function having a polynomial growth with respect to the second variable (see Hypotheses \ref{RDS1}\eqref{bbbb}). Let $\{W(t)\}_{t\geq 0}$ be a $L^2(\mathcal{O})$-cylindrical Wiener process with respect to a normal filtered probability space $(\Omega,\mathcal{F},\{\mathcal{F}_t\}_{t\geq 0},\mathbb{P})$ (see \cite[Section 4.1.2]{DA-ZA4}, for a definition).
We consider the following stochastic reaction-diffusion equation
\begin{gather}\label{eqF02}
\eqsys{
dX(t,x)=\big[AX(t,x)+F(X(t,x))\big]dt+(-A)^{-\gamma/2}dW(t), & t>0;\\
X(0,x)=x\in C(\CO),
}
\end{gather} 
where $\gamma\in [0,1]$. For any $x\in E:=\overline{\Dom(A)}$ (the closure is taken in $C(\ol{\mathcal{O}})$, with respect the uniform norm), under suitable hypotheses (see \cite[Proposition 6.2.2]{CER1}), equation \eqref{eqF02} has a unique $E$-valued pathwise continuous mild solution $\mi{X}$ (see Definition \ref{Mild1}) and we can define the transition semigroup $\{P(t)\}_{t\geq 0}$ given by
\begin{align}\label{transition}
P(t)\varphi(x):=\E[\varphi(X(t,x))]:=\int_\Omega\varphi(X(t,x))(\omega)\mathbb{P}(d\omega),\qquad \varphi\in {\rm BUC}(E),\ x\in E,\ t\geq 0,
\end{align}
where ${\rm BUC}(E)$ is the space of real valued, bounded and uniformly continuous function on $E$. Using the same arguments as in  \cite[Proposition 3.3]{CDP12} it is possible to prove that $\{P(t)\}_{t\geq 0}$ is a weakly continuous semigroup on ${\rm BUC}(E)$, so we let $N:\Dom(N)\subseteq \rm{BUC}(E)\ra \rm{BUC}(E)$ be its weak generator, namely the unique closed operator such that
\begin{align}\label{uintro}
u:=R(\lambda,N)\varphi=\int^{+\infty}_0e^{-\lambda t}P(t)\varphi dt,\qquad \lambda>0,\ \varphi\in \rm{BUC}(E).
\end{align}
In the case $d=1$, $\CO=[0,1]$ and $\gamma=0$, the authors of \cite{CDP12} prove maximal Schauder regularity estimates for the function $u$ given by \eqref{uintro}, namely if $\varphi$ is $\alpha$-H\"older continuous for some $\alpha\in (0,1)$ then $u$ is twice Fr\'echet differentiable with bounded and continuous derivatives and with second order $\alpha$-H\"older continuous derivative. Even in the case $F\equiv 0$, if $\gamma>0$ in \eqref{eqF02} then it does not seem reasonable to obtain the Schauder estimates similar to the one in \cite{CDP12} for $u$, see for instance \cite[Section 5.1]{LR21}. 

The main purpose of this paper is to show how it is possible to obtain Schauder type estimates for the function $u$ depending on the constant $\gamma\in [0,1]$. Clearly in the case $\gamma=0$ and $d=1$ we will obtain the same result of \cite{CDP12}. Moreover 
we prove Schauder regularity estimates for the mild solution of the following evolution equation 
\begin{align}\label{evol_prob}
\eqsys{\frac{d}{dt}v(t,x)=Nv(t,x)+g(t,x), & t\in(0,T],\ T>0,\ x\in E;\\
v(0,x)=f(x), & x\in E,}
\end{align}
where $f,g$ belong to suitable H\"older spaces (see Section \ref{proofevol}). These Schauder results will be similar to the one in \cite{KCL75,KCL80}, where they were proved for evolution equations driven by a second order operator with bounded coefficients in $\R^d$. 

We stress that the coloring of the noise in \eqref{eqF02} (meaning adding the operator $(-A)^{-\gamma/2}$ in front of $dW(t)$), is not an arbitrary choice. 
Indeed, if $d=2,3$, this choice is necessary to guarantee the existence and uniqueness of a pathwise continuous mild solution for \eqref{eqF02}. 
More precisely 
there exists a positive constant $\gamma_{\mathcal{O}}$, depending on $\mathcal{O}$, such that if $\gamma>\gamma_{\mathcal{O}}$, then \eqref{eqF02} has a unique pathwise continuous mild solution (see \cite[Remark 6.1.1]{CER1} and \cite[Section 4.2]{FU-OR1}).

The results of this paper are based on the estimates contained in \cite[Section 6.5]{CER1} and on the techniques presented in \cite{CL21,LR21} where analogous results were obtained for stationary and evolution equations associated to linear stochastic equations. However, for the semilinear case, we were unable to find results in the literature that link the color of the noise driving \eqref{eqF02} with the H\"older regularity of the solutions of the stationary and evolution equation associated to \eqref{eqF02}, so the results of this paper are the first of their kind for stochastic partial differential equations of this type. Moreover we believe that the results contained in this paper may be the starting point for studying the existence and uniqueness of martingale solutions of stochastic partial differential equations of the type
\begin{gather*}
\eqsys{
dX(t,x)=\big[AX(t,x)+F(X(t,x))+G(X(t,x))\big]dt+(-A)^{-\gamma/2}dW(t), & t>0;\\
X(0,x)=x,
}
\end{gather*}
where $G$ is a H\"older continuous function (we refer to \cite{AD-MA-PR1}, for the case $F\equiv0$). However, this study goes far beyond the purposes of this paper.

Moreover, to give a more complete view of the results in the literature, we wish to point out that in \cite{BF_Schauder,CL19} Schauder regularity estimates along suitable directions are studied, in \cite{LR21} the case of pseudo-differential operators are considered; in \cite{CDP96,PRI1,PZ00} the Gross Laplacian and some of its perturbations are considered; and in \cite{CL21} the non-autonomous linear case is investigated. For other related results see also \cite{ABGP06,ABP05, DA5}.

Before rigorously stating the hypotheses and the main results of this paper, it is necessary to recall some standard definitions and fix the notations.

{\small
\subsection*{Notations}
Let $(\Omega,\mathcal{F},\{\mathcal{F}_t\}_{t\geq 0},\mathbb{P})$ be a filtered probability space. We say that $\{\mathcal{F}_t\}_{t\geq 0}$ is a normal filtration if
\[\mathcal{F}_t=\bigcap_{s>t}\mathcal{F}_s \qquad t\geq 0;\]
and $\mathcal{F}_0$ contains all the elements $A\in\mathcal{F}$ such that $\mathbb{P}(A)=0$. 

Let $\K$ be a Banach space endowed with the norm $\norm{\cdot}_\K$. We denote by $\mathcal{B}(\K)$ the Borel $\sigma$-algebra associated to the norm topology in $\K$. For any $p\geq 1$ we denote by $L^p((\Omega,\mathcal{F},\mathbb{P});\K)$ the space of equivalence classes of measurable function $f:(\Omega,\mathcal{F})\ra (\K,\mathcal{B}(\K))$, with respect to the equality $\mathbb{P}$-almost everywhere equivalent relation, such that
\[
\|f\|^p_{L^p((\Omega,\mathcal{F},\mathbb{P});\K)}:=\E[\norm{f}^p_\K]=\int_\Omega \norm{f(\omega)}^p_\K\mathbb{P}(d\omega)<+\infty,
\]
where the integral is meant in the Bochner sense, see \cite{DU77}.

Let $\K_1$ and $\K_2$ be two real Banach spaces equipped with the norms $\norm{\cdot}_{\K_1}$ and $\norm{\cdot}_{\K_2}$, respectively. For $k\in\N$ we denote by $\mathcal{L}^{(k)}(\K_1;\K_2)$ the space of continuous multilinear maps from $\K_1^k$ to $\K_2$, if $k=1$ we simply write $\mathcal{L}(\K_1;\K_2)$, while if $\K_1=\K_2$ we write $\mathcal{L}^{(k)}(\K_1)$. 

We denote by $B_b(\K_1;\K_2)$ the set of the bounded and Borel measurable functions from $\K_1$ to $\K_2$. If $\K_2=\R$, then we simply write $B_b(\K_1)$. We denote by ${\rm BUC}(\K_1;\K_2)$ the space of bounded and uniformly continuous functions from $\K_1$ to $\K_2$. We consider ${\rm BUC}(\K_1;\K_2)$ with the norm
\[
\norm{f}_{\infty}=\sup_{x\in \K_1}\|f(x)\|_{\K_2}.
\]
If $\K_2=\R$ we simply write  ${\rm BUC}(\K_1)$. 

For any $\alpha\in (0,1)$, we say that a bounded function $f:\K_1\ra\K_2$ is $\alpha$-H\"older continuous (Lipschitz continuous, respectively) if
\begin{align*}
[f]_{C_b^\alpha(\mathcal{K}_1;\mathcal{K}_2)}&:=\sup_{\substack{x,y\in \K_1\\ x\neq y}}\frac{\norm{f(x)-f(y)}_{\K_2}}{\norm{x-y}^{\alpha}_{\K_1}}<+\infty;\\ 
\Bigg([f]_{{\rm Lip}_b(\mathcal{K}_1;\mathcal{K}_2)}&:=\sup_{\substack{x,y\in \K_1\\ x\neq y}}\frac{\norm{f(x)-f(y)}_{\K_2}}{\norm{x-y}_{\K_1}}<+\infty,\text{ respectively}\Bigg).
\end{align*}
We denote by $C_b^\alpha(\K_1;\K_2)$ (${\rm Lip}_b(\K_1;\K_2)$, respectively) the subspace of ${\rm BUC}(\K_1;\K_2)$ of the $\alpha$-H\"older (Lipschitz continuous, respectively) functions. The spaces $C_b^\alpha(\K_1;\K_2)$ and ${\rm Lip}_b(\K_1;\K_2)$ are a Banach spaces if they are endowed with the norms
\begin{align*}
\norm{f}_{C_b^\alpha(\K_1;\K_2)}:=\norm{f}_\infty+[f]_{C_b^\alpha(\mathcal{K}_1;\mathcal{K}_2)};\qquad \norm{f}_{{\rm Lip}_b(\K_1;\K_2)}:=\norm{f}_\infty+[f]_{{\rm Lip}_b(\mathcal{K}_1;\mathcal{K}_2)}.
\end{align*}  
If $\K_2=\R$ we simply write $C_b^\alpha(\K_1)$ and ${\rm Lip}_b(\K_1)$.

We say that a function $f:\K_1\ra \K_2$ is Fr\'echet differentiable at the point $x\in \K_1$, if there exists $L_x\in\mathcal{L}(\K_1;\K_2)$ such that 
\begin{align*}
\lim_{\norm{h}_{\K_1}\ra 0}\frac{\|f(x+h)-f(x)-L_xh\|_{\K_2}}{\norm{h}_{\K_1}}=0.
\end{align*}
When it exists, the operator $L_x$ is unique and it is called Fr\'echet derivative of $f$ at the point $x\in \K_1$ and we denote it by $\J f(x):=L_x$. We say that $f$ is twice Fr\'echet differentiable at the point $x\in \K_1$ if the map $\J f:\K_1\ra \mathcal{L}(\K_1;\K_2)$ is Fr\'echet differentiable at the point $x\in \K_1$. 
We call second order Fr\'echet derivative of $f$ at the point $x\in \K_1$ the unique continuous bilinear form $\J^2 f(x):\K_1\times \K_1\ra \K_2$ defined by
\[
\J^2 f(x)(h,k):=\J (\J f(x)h)k,\quad h,k\in \K_1.
\]
For any $k\in\N$, in an analogous way, we can define the notion of $k$ times Fr\'echet differentiable function $f$ and we denote by $\J^k f(x)$ its $k$ tikmes Fr\'echet derivative operator at the point $x\in \K_1$, in particular $\J^k f(x)$ belongs to $\mathcal{L}^{(k)}(\K_1;\K_2)$. In a similar way we define the Gateaux derivative operator and we denote it by $\D_G$. For further standard results concerning differentiability in infinite dimension we refer to \cite[Chapter 7]{FHH11}.

For $k\in\N$, we denote by ${\rm BUC}^{k}(\K_1)$ the space of bounded, uniformly continuous and $k$ times Fr\'echet differentiable functions $f:\K_1\ra\R$ such that $\D^i_Rf\in {\rm BUC}(\K_1;\mathcal{L}^{(i)}(\K_1;\R))$. We endow ${\rm BUC}^{k}(\K_1)$  with the norm
\[
\norm{f}_{{\rm BUC}^{k}(\K_1)}:=\norm{f}_{\infty}+\sum^k_{i=1}\sup_{x\in\K_1}\|\J^i_Rf(x)\|_{\mathcal{L}^{(i)}(\K_1;\R)}.
\]
For $k\in\N$ and $\alpha\in (0,1)$, we denote by $C_b^{k+\alpha}(\K_1)$ the subspace of ${\rm BUC}^{k}(\K_1)$ of functions $f:\K_1\ra\R$ such that $\D^k_Rf\in C_b^{\alpha}(\K_1;\mathcal{L}^{(k)}(\K_1;\R))$. We endow $C_b^{k+\alpha}(\K_1)$ with the norm
\[
\norm{f}_{C_b^{k+\alpha}(\K_1)}:= \norm{f}_{{\rm BUC}^{k}(\K_1)}+[\J^k f]_{{C^\alpha_b(\mathcal{K_1};\mathcal{L}^{(k)}(\K_1;\R)})}.
\]
For $\beta>0$ and $\beta\notin\N$ let $[\beta]$ and $\{\beta\}$ be the integer and the fractional part of $\beta$, respectively. We denote by $C_b^{\beta}(\K_1)$ the space $C_b^{[\beta]+\{\beta\}}(\K_1)$.

We need to recall the definition of Zygmund space. The Zygmund space $\mathcal{Z}^1(\K_1;\K_2)$ is the subspace of ${\rm BUC}(\K_1;\K_2)$ consisting of functions $f$ such that
\begin{align*}
[f]_{\mathcal{Z}^1(\K_1;\K_2)}:=\sup_{\substack{x\in \K_1\\ h\in \K_1\setminus\set{0}}}\frac{\|f(x+2h)-2f(x+h)+f(x)\|_{\K_2}}{\norm{h}_{\K_1}},
\end{align*}
is finite. The space $\mathcal{Z}^1(\K_1;\K_2)$ is a Banach space if endowed with the norm
\[\norm{f}_{\mathcal{Z}^1(\K_1;\K_2)}:=\norm{f}_\infty+[f]_{\mathcal{Z}^1(\K_1;\K_2)}.\]
If $\mathcal{K}_2=\R$, then we simply write $\mathcal{Z}^1(\K_1)$. We recall that every Lipschitz function belongs to $\mathcal{Z}^1(\K_1;\K_2)$, but, even in the case $\K_1=\K_2=\R$, there are bounded and continuous functions not belonging to $\mathcal{Z}^1(\K_1;\K_2)$ (see, for example, \cite{Tri95}). If $k\geq 2$, then we consider the following Zygmund spaces $\mathcal{Z}^k(\K_1)$ defined as
\begin{align*}
\mathcal{Z}^k(\K_1):=\{f\in{\rm BUC}^{k-1}(\K_1)\,|\,\J^{k-1}f\in\mathcal{Z}^1(\K_1;\mathcal{L}^{k-1}(\K_1;\R))\},
\end{align*}
endowed with the norm
\begin{align*}
\|f\|_{\mathcal{Z}^k(\K_1)}:=\|f\|_{{\rm BUC}^{k-1}(\K_1)}+[\J^{k-1}f]_{\mathcal{Z}^1(\K_1;\mathcal{L}^{k-1}(\K_1;\R))}.
\end{align*}
}

\subsection*{Hypotheses and main results}
In this paper we will work in the same framework of \cite[Chapter 6]{CER1} which we recall in the following hypotheses.
\begin{hyp}\label{RDS1}
Let $\OO$ be a bounded subset of $\R^d$, with $d=1,2,3$ and let $(\Omega, \mathcal{F},\{\mathcal{F}_t\}_{t\geq 0},\mathbb{P})$ be a normal filtered probability space.
\begin{enumerate}[\rm (a)]
\item $\{W(t)\}_{t\geq 0}$ is a $L^2(\mathcal{O})$-cylindrical Wiener process defined on $(\Omega, \mathcal{F},\{\mathcal{F}_t\}_{t\geq 0},\mathbb{P})$.

\item $A:\Dom(A)\subseteq C(\CO)\ra C(\CO)$ is a realization of the Laplacian operator with Dirichlet or Neumann boundary conditions. Throughout the paper we let $E=\ol{{\rm Dom}(A)}$, where the closure is taken in $C(\ol{\mathcal{O}})$ with respect to the uniform norm.

\item\label{gamma} Let $\gamma\in [0,1]$ be such that for any $p\geq 1$ and $T>0$ the map $\omega\mapsto W_A(\cdot)(\omega)$
belongs to $L^p((\Omega, \mathcal{F},\mathbb{P});C([0,T];E))$, where
\begin{align*}
W_A(t):=\int^t_0e^{(t-s)A}(-A)^{-\gamma/2}dW(s),\qquad t\geq 0.
\end{align*}

\item\label{bbbb} $F:C(\CO)\ra C(\CO)$ is the Nemytskii operator defined as
\[
F(x)(\xi):=b(\xi,x(\xi)),\qquad \xi\in\CO,\ x\in C(\CO),
\]
where $b:\ol{\mathcal{O}}\times \R\ra\R$ is such that, for every $\xi\in\ol{\mathcal{O}}$, the map $z\mapsto b(\xi,z)$ belongs to $C^3(\R)$ and the maps $\D_z^jb:\ol{\mathcal{O}}\times \R\ra\R$ are continuous, for $j=0,1,2,3$. Moreover there exists an interger $m\geq 0$ such that
\begin{align*}
\sup_{\xi\in\ol{\mathcal{O}}}\sup_{z\in \R}\frac{\abs{\J^j_zb(\xi,z)}}{1+|z|^{\max(0,2m+1-j)}}<+\infty,\qquad j=0,1,2,3.
\end{align*}
Furthermore if $m\geq 1$, then there exist $a>0$, $c\in\R$ and $\theta\geq 0$ such that
\begin{align}\label{superdis}
\sup_{\xi\in\ol{\mathcal{O}}}(b(\xi,z+h)-b(\xi,z))h\leq -a h^{2(m+1)}+c(1+\abs{z}^\theta),\qquad z,h\in \R.
\end{align}
\end{enumerate}
\end{hyp}

Hypotheses \ref{RDS1} are required to obtain existence, uniqueness and some regularity properties for the mild solution of \eqref{eqF02} as we will show in Section \ref{prel}. In Section \ref{RMKS} we present some significant examples of $A$, $F$, $\OO$ and $\gamma$ which satisfy Hypotheses \ref{RDS1}.

The following is one of the main results of the paper.

\begin{thm}\label{THM_reg_BUC}
Assume Hypotheses \ref{RDS1} hold true. Let $\lambda>0$, $f\in{\rm BUC}(E)$ and let $u=R(\lambda,N)f$ be the function introduced in \eqref{uintro}.
\begin{enumerate}[\rm (i)]
\item\label{THM_reg_BUC1} If $\gamma=0$, then $u\in \mathcal{Z}^2(E)$ and there exists $C=C(\lambda)>0$ such that $\|u\|_{\mathcal{Z}^2(E)}\leq C\|f\|_\infty$.

\item\label{THM_reg_BUC2} If $\gamma\in(0,1)$, then $u$ belongs to $C_b^{1+(1-\gamma)/(1+\gamma)}(E)$ and there exists $C=C(\gamma,\lambda)>0$ such that
\begin{align*}
\|u\|_{C_b^{1+(1-\gamma)/(1+\gamma)}(E)}\leq C\|f\|_\infty.
\end{align*}

\item\label{THM_reg_BUC3} If $\gamma=1$, then $u\in\mathcal{Z}^1(E)$ and there exists $C=C(\lambda)>0$ such that $\|u\|_{\mathcal{Z}^1(E)}\leq C\|f\|_\infty$.
\end{enumerate}
\end{thm}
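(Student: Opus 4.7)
The plan is to use the explicit representation $u(x) = \int_0^{+\infty} e^{-\lambda t} P(t)f(x)\,dt$ together with pointwise derivative bounds for $P(t)f$ whose singularity at $t=0$ depends on the color $\gamma$. From the framework of \cite[Section 6.5]{CER1} (Malliavin/Bismut--Elworthy--Li--type identities adapted to the reaction-diffusion setting with colored noise) I expect estimates of the form
\[
\|\J^k P(t)f\|_{\infty} \;\leq\; C_k\,(1\wedge t)^{-k(1+\gamma)/2}\,\|f\|_\infty,\qquad k=1,2,3,
\]
with an exponential decay in $t$ away from $0$. These are the key inputs; combining them via the resolvent integral and optimizing over a cut-off will yield the three Schauder-type bounds. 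The uniform bound $\|u\|_\infty \leq \lambda^{-1}\|f\|_\infty$ is immediate from $\|P(t)f\|_\infty\leq\|f\|_\infty$.

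For case \eqref{THM_reg_BUC2} ($\gamma\in(0,1)$), the bound on $\J^1 P(t)f$ has exponent $(1+\gamma)/2<1$, so differentiation under the integral is justified and $\|\J u\|_\infty \leq C\|f\|_\infty$. To get the H\"older seminorm of $\J u$, I would write
\[
\J u(x+h)-\J u(x) = \int_0^{t_0}\!e^{-\lambda t}[\J P(t)f(x+h)-\J P(t)f(x)]\,dt + \int_{t_0}^{+\infty}\!e^{-\lambda t}[\cdots]\,dt,
\]
bounding the first piece by $\int_0^{t_0}t^{-(1+\gamma)/2}dt \sim t_0^{(1-\gamma)/2}$ (using $k=1$) and the second piece by $|h|\int_{t_0}^{+\infty}t^{-(1+\gamma)}dt \sim |h|\,t_0^{-\gamma}$ (using the mean value theorem and $k=2$). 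The optimal choice $t_0=|h|^{2/(1+\gamma)}$ equalizes both contributions at order $|h|^{(1-\gamma)/(1+\gamma)}$, giving the claimed exponent.

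For the Zygmund endpoints I would test the Zygmund seminorm directly on the integral representation. In case \eqref{THM_reg_BUC1} ($\gamma=0$), to estimate the Zygmund seminorm of $\J u$ I would split
\[
\J u(x+2h)-2\J u(x+h)+\J u(x) = \int_0^{t_0}e^{-\lambda t}[\cdots]\,dt + \int_{t_0}^{+\infty}e^{-\lambda t}[\cdots]\,dt
\]
using the trivial bound $4\|\J P(t)f\|_\infty \leq C t^{-1/2}\|f\|_\infty$ for $t<t_0$ (giving $t_0^{1/2}$), and the second-order Taylor remainder $\|\J^3 P(t)f\|_\infty |h|^2 \leq C t^{-3/2}|h|^2\|f\|_\infty$ for $t>t_0$ (giving $|h|^2/t_0^{1/2}$); the choice $t_0=|h|^2$ gives $|h|$. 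In case \eqref{THM_reg_BUC3} ($\gamma=1$), the same splitting applied to $u(x+2h)-2u(x+h)+u(x)$ with the trivial $4\|f\|_\infty$ for small $t$ and the second-derivative bound $C|h|^2 t^{-2}\|f\|_\infty$ for large $t$, with $t_0=|h|$, yields the $\mathcal{Z}^1$ estimate.

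The main obstacle is not the interpolation argument, which is largely routine once the template above is set up, but rather securing and stating the correct $\gamma$-dependent derivative estimates for $P(t)f$ in the nonlinear setting governed by Hypotheses \ref{RDS1}. The Nemytskii nonlinearity $F$ has polynomial growth and one must propagate the dissipativity condition \eqref{superdis} through the variation equations to control $\J^k X(t,x)$ uniformly in $x$; this is precisely what the framework of \cite[Section 6.5]{CER1} (together with the techniques of \cite{CL21,LR21}) is designed to provide. Once those bounds are in hand, the three cases of the theorem reduce to the three elementary splitting computations sketched above, with the exponent $(1-\gamma)/(1+\gamma)$ arising as the natural balance and the Zygmund spaces appearing exactly at the borderline values $\gamma\in\{0,1\}$ where the H\"older exponent would be integer.
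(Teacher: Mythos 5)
Your proposal is correct and follows essentially the same route as the paper: the pointwise bounds $\|\J^{j}P(t)f\|_{\mathcal{L}^{(j)}(E;\R)}\leq C_j(\min\{1,t\})^{-j(1+\gamma)/2}\|f\|_\infty$ (the paper's \eqref{Goku}, imported from \cite[Section 6.5]{CER1}) are exactly the key input, differentiation under the resolvent integral gives $\D u$, and the H\"older/Zygmund estimates come from splitting the time integral at $t_0=\|h\|_E^{2/(1+\gamma)}$ (respectively $\|h\|_E^{2}$, $\|h\|_E$) with the low-order bound on $(0,t_0)$ and a Taylor representation with one higher derivative on $(t_0,\infty)$, just as in the paper's treatment of $a_{1,\gamma}$ and $b_{1,\gamma}$. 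The only small points the paper adds are the separate trivial treatment of $\|h\|_E\geq 1$ and the verification of uniform continuity of $u$ and $\D u$ via the appendix lemma (Proposition \ref{Zone400}), which your sketch omits but which are routine.
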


Theorem \ref{THM_reg_BUC} explains how the color of the noise influences the regularity of the function $u$ defined in \eqref{uintro}. In the second main result of this paper we will show what happens if the function $f$ in \eqref{uintro} is bounded and $\alpha$-H\"older continuous.

\begin{thm}\label{Thm_Schauder_Rd}
Assume Hypotheses \ref{RDS1} hold true. Let $\lambda>0$, $\alpha\in (0,1)$, $f\in C_b^{\alpha}(E)$ and let $u=R(\lambda,N)f$ be the function introduced in \eqref{uintro}.
\begin{enumerate}[\rm (i)]
\item\label{Thm_Schauder_Rd1} If $\gamma\in (0,1]$ and $0<\alpha<\frac{2\gamma}{1+\gamma}$ then $u$ belongs to $C_b^{1+\alpha+(1-\gamma)/(1+\gamma)}(E)$ and there exists $C=C(\gamma,\lambda,\alpha)>0$ such that
\begin{align*}
\|u\|_{C_b^{1+\alpha+(1-\gamma)/(1+\gamma)}(E)}\leq C\|f\|_{C_b^\alpha(E)}.
\end{align*}

\item\label{Thm_Schauder_Rd2} If $\gamma\in(0,1)$ and $\alpha=\frac{2\gamma}{1+\gamma}$, then $u$ belongs to $\mathcal{Z}^1(E)$  there exists $C=C(\gamma,\lambda)>0$ such that
\begin{align*}
\|u\|_{\mathcal{Z}^1(E)}\leq C\|f\|_{C_b^{2\gamma/(1+\gamma)}(E)}.
\end{align*}

\item\label{Thm_Schauder_Rd3} If $\gamma\in[0,1)$ and $\alpha>\frac{2\gamma}{1+\gamma}$ then $u$ belongs to $C_b^{2+\alpha-2\gamma/(1+\gamma)}(E)$ and there exists $C=C(\gamma,\lambda,\alpha)>0$ such that
\begin{align*}
\|u\|_{C_b^{2+\alpha-2\gamma/(1+\gamma)}(E)}\leq C\|f\|_{C_b^\alpha(E)}.
\end{align*}
\end{enumerate}
\end{thm}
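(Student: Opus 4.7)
The starting point is the Laplace transform representation
\[
u(x) = \int_0^\infty e^{-\lambda t} P(t) f(x) \, dt, \qquad x \in E.
\]
As in the proof of Theorem \ref{THM_reg_BUC}, the strategy is to differentiate under the integral sign the appropriate number of times and control the integrand via smoothing estimates on $P(t)$. The novelty compared with Theorem \ref{THM_reg_BUC} is that, since $f$ is $\alpha$-H\"older rather than merely bounded, the estimates of \cite[Section 6.5]{CER1} (together with the techniques of \cite{CL21,LR21}) yield \emph{sharper} bounds on $\|\J^k P(t) f\|_\infty$ near $t=0$.

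\emph{Smoothing input.} Schematically, the plan is to establish bounds of the form
\[
\|\J^k P(t) f\|_\infty \leq C \, t^{-(1+\gamma)(k - \alpha)/2} \, \|f\|_{C_b^\alpha(E)}, \qquad t \in (0,1],
\]
together with analogous estimates for the H\"older seminorm of $\J^k P(t) f$. The factor $2/(1+\gamma)$ is the total regularity gain of the resolvent, consistent with Theorem \ref{THM_reg_BUC}, and the $\alpha$-H\"older regularity of $f$ softens the singularity at $t=0^+$ by the amount $\alpha(1+\gamma)/2$.

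\emph{Case \eqref{Thm_Schauder_Rd3}.} For $\alpha > 2\gamma/(1+\gamma)$ the exponent $(1+\gamma)(2-\alpha)/2$ is strictly less than $1$, so the $k=2$ integrand is integrable at $t=0^+$ and differentiation under the integral gives $\J^2 u \in {\rm BUC}(E;\mathcal{L}^{(2)}(E;\R))$. For the H\"older seminorm of $\J^2 u$ at exponent $\alpha - 2\gamma/(1+\gamma)$, set $r := \|x-y\|_E$ and split
\[
\J^2 u(x) - \J^2 u(y) = \int_0^{r^{1+\gamma}} + \int_{r^{1+\gamma}}^{\infty} e^{-\lambda t}\bigl[\J^2 P(t) f(x) - \J^2 P(t) f(y)\bigr]\, dt.
\]
Bound the short-time piece by $2\|\J^2 P(t) f\|_\infty$ and the long-time piece by $r\,\|\J^3 P(t) f\|_\infty$ (or by a H\"older companion of the smoothing estimate); summing the two resulting powers of $r$ yields exactly $r^{\alpha - 2\gamma/(1+\gamma)}$.

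\emph{Case \eqref{Thm_Schauder_Rd1}.} For $\alpha < 2\gamma/(1+\gamma)$ the $k=2$ integrand is no longer integrable, so we work at the level of $\J u$. The plan is to differentiate once under the integral and run the analogous short-time/long-time splitting, obtaining the H\"older control of $\J u$ at exponent $\alpha + (1-\gamma)/(1+\gamma)$.

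\emph{Case \eqref{Thm_Schauder_Rd2}.} At the critical value $\alpha = 2\gamma/(1+\gamma)$, the two pieces of the H\"older splitting each produce a logarithmic divergence. We therefore estimate the Zygmund seminorm directly from a second difference (of $u$ or of $\J u$, depending on the target space), via a Taylor-like expansion of $P(t) f$ combined with the smoothing estimates; the cancellation in the second difference is precisely what absorbs the borderline logarithmic singularity and produces a bound linear in $\|h\|_E$.

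\emph{Main obstacle.} The principal technical hurdle is the derivation of the sharp smoothing estimates above for $f \in C_b^\alpha(E)$, with the precise exponent in $t$ and the correct dependence on $\gamma$. These rest on Bismut--Elworthy--Li type formulas in the semilinear, infinite-dimensional setting developed in \cite[Section 6.5]{CER1}, combined with an interpolation step and a careful analysis of the time scale $t \sim r^{1+\gamma}$ dictated by the colour of the noise. Case \eqref{Thm_Schauder_Rd2} is additionally delicate and will require a separate Zygmund-type argument to kill the borderline divergence.
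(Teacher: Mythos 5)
Your route is the paper's route: differentiate the Laplace-transform formula for $u$ under the integral, control the integrand by a smoothing estimate for $C_b^\alpha$ data, split the time integral at a scale tied to the increment, and handle the critical case by a second-difference (Zygmund) argument. In particular, the ``main obstacle'' you identify is not open in the paper: the estimate $\|\J^j P(t)f\|_{\mathcal{L}^{(j)}(E;\R)}\leq c_\alpha(\min\{1,t\})^{-(j-\alpha)(1+\gamma)/2}\|f\|_{C_b^\alpha(E)}$ is exactly \eqref{Goku_alpha}, which the paper obtains by interpolating the ${\rm BUC}$ bound \eqref{Goku} and the Lipschitz bound \eqref{Gokulip} through Theorems \ref{Clip} and \ref{classic}; no new Bismut--Elworthy--Li analysis is needed beyond what is already quoted from \cite[Section 6.5]{CER1}. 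Your verification that $(2-\alpha)(1+\gamma)/2<1$ exactly when $\alpha>2\gamma/(1+\gamma)$, so that $\J^2 u$ exists in case \eqref{Thm_Schauder_Rd3}, matches the paper, as does working at the level of $\J u$ in case \eqref{Thm_Schauder_Rd1}.

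The one concrete error is the splitting scale. You split at $t\sim r^{1+\gamma}$ (with $r=\|x-y\|_E$, resp. $\|h\|_E$); the correct scale, and the one used in the paper, is $r^{2/(1+\gamma)}$. Indeed, the short-time piece at the level of $\J^2$ is of order $T^{1-(2-\alpha)(1+\gamma)/2}$ and the long-time piece of order $r\,T^{1-(3-\alpha)(1+\gamma)/2}$, and these balance precisely when $T^{(1+\gamma)/2}=r$, i.e. $T=r^{2/(1+\gamma)}$, which yields the claimed power $r^{\alpha-2\gamma/(1+\gamma)}$ from both pieces. With $T=r^{1+\gamma}$ the two powers do not match (they agree only if $(1+\gamma)^2=2$): for instance at $\gamma=0$ your split gives $r^{\alpha/2}$ from the short-time piece instead of the required $r^{\alpha}$, so the claimed exponent is not obtained. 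The same correction applies in case \eqref{Thm_Schauder_Rd1}, where the paper splits $\D u$ at $\|h\|_E^{2/(1+\gamma)}$ (functions $a_{1,\gamma}$, $b_{1,\gamma}$ in the proof) to get $\|h\|_E^{\alpha+(1-\gamma)/(1+\gamma)}$, and in the Zygmund case \eqref{Thm_Schauder_Rd2}, where the second-difference identity \eqref{BA_5} replaces the first difference. Once the scale is fixed, your plan coincides with the paper's proof (which writes out case \eqref{Thm_Schauder_Rd1} in detail and treats \eqref{Thm_Schauder_Rd2}--\eqref{Thm_Schauder_Rd3} as in Theorem \ref{THM_reg_BUC}); remember also the separate elementary bound for $\|h\|_E\geq 1$, as in \eqref{caso>1,2}.
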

In the case $\gamma=0$ and $\OO=[0,1]$, Theorem \ref{Thm_Schauder_Rd} is the result contained in \cite{CDP12} instead Theorem \ref{THM_reg_BUC} is an extension of the results in \cite{CDP12} to the case $\alpha=0$.  In the other cases Theorems  \ref{THM_reg_BUC} and \ref{Thm_Schauder_Rd} are new results for stationary equations associated to stochastic partial differential equations similar to \eqref{eqF02}. 

We have decided to write Theorems \ref{THM_reg_BUC} and \ref{Thm_Schauder_Rd} as you see them so as to separate the integer and fractional part in the exponent of $C_b^\beta(E)$. In this way it is easy to see the number of derivatives that the solution $u$ possesses and the H\"olderianity exponent of the highest order derivative. The following two results are a rewriting of Theorems \ref{THM_reg_BUC} and \ref{Thm_Schauder_Rd} in such a way that the gain in regularity that the solution $u$ undergoes becomes more evident.

\par\addvspace{7pt}
\noindent {\bf Theorem \ref{THM_reg_BUC}'.}\textit{ Assume Hypotheses \ref{RDS1} hold true. Let $\lambda>0$, $f\in{\rm BUC}(E)$ and let $u=R(\lambda,N)f$ be the function introduced in \eqref{uintro}.}
\begin{enumerate}[\rm (i)]
\item \textit{If $2/(1+\gamma)\notin \N$, then $u\in C_b^{2/(1+\gamma)}(E)$ and there exists $C=C(\lambda,\gamma)>0$ such that $\|u\|_{C_b^{2/(1+\gamma)}(E)}\leq C\|f\|_\infty$.
}
\item \textit{If $2/(1+\gamma)\in \N$, then $u\in\mathcal{Z}^{2/(1+\gamma)}(E)$ and there exists $C=C(\lambda,\gamma)>0$ such that $\|u\|_{\mathcal{Z}^{2/(1+\gamma)}(E)}\leq C\|f\|_\infty$.
}
\end{enumerate}
\par\addvspace{7pt}
\noindent The following is a rewriting of Theorem \ref{Thm_Schauder_Rd}.

\par\addvspace{7pt}
\noindent {\bf Theorem \ref{Thm_Schauder_Rd}'.}\textit{ Assume Hypotheses \ref{RDS1} hold true. Let $\lambda>0$, $\alpha\in (0,1)$, $f\in C_b^{\alpha}(E)$ and let $u=R(\lambda,N)f$ be the function introduced in \eqref{uintro}.}
\begin{enumerate}[\rm (i)]
\item \textit{If $\alpha+2/(1+\gamma)\notin \N$, then $u$ belongs to $C_b^{\alpha+2/(1+\gamma)}(E)$ and there exists $C=C(\lambda,\gamma,\alpha)>0$ such that $\|u\|_{C_b^{\alpha+2/(1+\gamma)}(E)}\leq C\|f\|_{C_b^\alpha(E)}$.
}
\item \textit{If $\alpha+2/(1+\gamma)\in \N$, then $u$ belongs to $\mathcal{Z}^{\alpha+2/(1+\gamma)}(E)$ and there exists $C=C(\lambda,\gamma,\alpha)>0$ such that $\|u\|_{\mathcal{Z}^{\alpha+2/(1+\gamma)}(E)}\leq C\|f\|_{C_b^\alpha(E)}$.
}
\end{enumerate}
\par\addvspace{7pt}
Thus it follows by Theorems \ref{THM_reg_BUC}' and \ref{Thm_Schauder_Rd}' that the solution $u$ always gains a regularity of $\alpha+2/(1+\gamma)$, that is, its regularity depends on both the smoothness of the initial datum $f$ and the coloring of the noise. We will also prove spatial Schauder regularity estimates (similar to those in Theorems \ref{THM_reg_BUC} and \ref{Thm_Schauder_Rd}) for the mild solution of the evolution equation \ref{evol_prob} (Theorems \ref{THM_evol_reg_BUC} and \ref{Thm_evol_Schauder_Rd}).

We conclude this introduction with some observations. At the cost of some technical modifications in the proofs, we belive to be possible to prove the same results contained in this paper in the more abstract setting considered in \cite{AD-BA-MA1,Big21,Mas08}. But this goes beyond the scope of the present paper. Moreover, if $m=1$ in Hypothesis \ref{RDS1}\eqref{bbbb}, then $F\in {\rm BUC}^3(L^2(\OO);L^2(\OO))$, hence all the results contained in this paper holds true by replacing $E$ with $L^2(\OO)$. 

We would like to point out that in \cite{CL19,CL21,LR21} the authors obtain results similar to Theorems \ref{THM_reg_BUC} and \ref{Thm_Schauder_Rd} for linear equations by replacing the space of uniformly continuous and bounded functions with those of continuous and bounded functions. For technical reasons, Schauder regularity results involving semilinear equations in infinite dimension are not known to be extendable to continuous and bounded functions.

This paper is organized as follows. In Section \ref{prel} we recall some results about the mild solution of the \eqref{eqF02} and the transition semigroup \eqref{transition} that are fundamental in the proofs of this paper. In Section \ref{proofstaz} we prove Theorems  \ref{THM_reg_BUC} and \ref{Thm_Schauder_Rd}. In Section \ref{proofevol} we prove spatial Schauder regularity estimates for the mild solution of the evolution equation \eqref{evol_prob}. Finally in Section \ref{RMKS} we present some examples and we suggest some possible generalization of the results of this paper.

\section{Preliminaries results}\label{prel}

In this section we recall some preliminaries results that are fundamental in the proofs of the main results of this paper.

\subsection{The transition semigroup}
We recall that a $L^2(\mathcal{O})$-cylindrical Wiener process $\{W(t)\}_{t\geq 0}$ is defined by
\begin{align}\label{cil}
W(t):=\sum_{k=1}^{+\infty}\beta_k(t)e_k
\end{align}
where $\{e_k\}_{k\in\N}$ is an orthonormal basis of $L^2(\mathcal{O})$ and $\{\beta_1(t)\}_{t\geq 0},\ldots,\{\beta_k(t)\}_{t\geq 0},\ldots$ are independent real Brownian motions. The convergence of the series in \eqref{cil} is meant in the space  $L^2((\Omega,\mathcal{F},\mathbb{P});C([0,T];\K))$, where $\K$ is an appropriate separable Hilbert space such that  $L^2(\mathcal{O})$ is continuously embedded in $\K$ (we refer to \cite{AP-RI1}, \cite[Section 4.1.2]{DA-ZA4}) and \cite[Chapter 2]{LI-RO1} for an overview about cylindrical Wienere processes).

\begin{defi}\label{Mild1}
Assume Hypotheses \ref{RDS1} hold true. For any  $x\in E$ we call mild solution of \eqref{eqF02} any $E$-valued process $\mi{X}$ such that, for any $t\geq 0$, it holds
\begin{align*}
 X(t,x)=e^{tA}x+\int_0^te^{(t-s)A}F(X(s,x))ds+W_A(t),\qquad\mathbb{P}\text{-a.e.},
\end{align*}
where $\{W_A(t)\}_{t\geq 0}$ is the stochastic convolution process defined in Hypothesis \ref{RDS1}\eqref{gamma}.
\end{defi}

We summarize the results contained in  \cite[Proposition 6.2.2 and Theorem 6.2.3]{CER1} in the following proposition.

\begin{prop}
Assume that Hypotheses \ref{RDS1} hold true. For any $x\in E$ the stochastic partial differential equation \eqref{eqF02} has a unique mild solution such that the map $\omega\mapsto X(\cdot,x)(\omega)$ belongs in $L^p((\Omega,\mathcal{F},\mathbb{P}); C([0,T];E))$, for any $T>0$ and $p\geq 1$. Moreover for any $t>0$
\begin{align*}
\sup_{x\in E}\|X(t,x)\|_E\leq k(t)t^{-1/2m},\qquad\mathbb{P}\text{-a.e.},
\end{align*} 
where the random variable $k(t)$ is defined by
\begin{align}\label{stimaunif}
k(t)=c(1+\norm{W_A(t)}^{\max(\theta/(1+2m),1)}_E),\qquad\mathbb{P}\text{-a.e.},
\end{align}
and $c$, $\theta$, $m$ are the constants appearing in Hypothesis \ref{RDS1}\eqref{bbbb}.
Furthermore there exists a constant $\eta\in \R$ such that for any $t>0$ and $x,y\in E$ it holds
\begin{align}\label{liplip}
\norm{X(t,x)-X(t,y)}_E\leq e^{t\eta}\norm{x-y}_E,\qquad\mathbb{P}\text{-a.e.}
\end{align}
\end{prop}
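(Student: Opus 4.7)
The statement is cited directly from \cite[Proposition 6.2.2 and Theorem 6.2.3]{CER1}, so the plan is to reconstruct the standard pathwise argument that yields all three assertions simultaneously. The key reduction is to subtract off the stochastic convolution and study the pathwise PDE satisfied by $Y(t,x):=X(t,x)-W_A(t)$, namely
\[
\frac{d}{dt}Y(t,x)=AY(t,x)+F(Y(t,x)+W_A(t)),\qquad Y(0,x)=x.
\]
By Hypothesis \ref{RDS1}\eqref{gamma}, for $\mathbb{P}$-a.e.\ $\omega$ the trajectory $W_A(\cdot)(\omega)$ belongs to $C([0,T];E)$, so $\omega$ can be frozen and the problem becomes a deterministic semilinear Cauchy problem in $E$ with a polynomially growing, dissipative nonlinearity. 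I would first treat a truncated problem $F_n(z):=F(z)\mathds{1}_{\{\|z\|_E\leq n\}}+(\text{Lipschitz extension})$ so that classical Banach fixed-point theory on $C([0,T_n];E)$ yields a local mild solution $Y_n$; passing to the limit $n\to\infty$ and showing that the solutions do not blow up is where the main work lies.

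The crucial a priori bound comes from the superdissipativity assumption \eqref{superdis}. I would test the equation for $Y$ against a suitable duality map of $Y(t,x)$ in $E$ (for instance a sign-type functional associated to the sup norm, or work in $L^{2p}(\mathcal O)$ and then pass to the limit $p\to\infty$ using the maximum principle). The point $(b(\xi,Y+W_A)-b(\xi,W_A))Y\leq -a|Y|^{2(m+1)}+c(1+|W_A|^\theta)$ feeds into an inequality of the form
\[
\frac{d}{dt}\|Y(t,x)\|_E^2\leq -2a\|Y(t,x)\|_E^{2(m+1)}+C\bigl(1+\|W_A(t)\|_E^{2\max(\theta/(1+2m),1)}\bigr).
\]
Comparing with the ODE $y'=-2ay^{m+1}+C$ one obtains that $\|Y(t,x)\|_E\leq k(t)\,t^{-1/(2m)}$ with $k(t)$ as in \eqref{stimaunif}, uniformly in $x\in E$; this bound is precisely what makes the limit $n\to\infty$ harmless and provides the second assertion after adding back $W_A(t)$ (which gets absorbed into the additive term of $k$).

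For the Lipschitz estimate \eqref{liplip}, the polynomial growth of $\partial_z b$ together with the dissipativity gives a one-sided Lipschitz bound
\[
\bigl(F(u)-F(v)\bigr)(\xi)\,(u(\xi)-v(\xi))\leq \eta\,|u(\xi)-v(\xi)|^2,\qquad \xi\in\overline{\mathcal O},
\]
for some $\eta\in\R$ (this is the standard consequence of $b'\leq\eta$ after the dissipative correction). Writing $Z(t):=X(t,x)-X(t,y)$, which solves $Z'=AZ+F(X(t,x))-F(X(t,y))$ with $Z(0)=x-y$ (no stochastic term, since both solutions share the same $W_A$), the comparison principle for the heat semigroup on $E$ together with Gronwall's lemma yields $\|Z(t)\|_E\leq e^{t\eta}\|x-y\|_E$, which is \eqref{liplip}. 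The $L^p$ continuity in $\omega$ then follows from this pathwise bound combined with the $L^p$-integrability of $\|W_A(\cdot)\|_{C([0,T];E)}$ granted by Hypothesis \ref{RDS1}\eqref{gamma}.

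The main obstacle is the superdissipative a priori estimate in the sup norm: because $E$ is not a Hilbert space, one cannot test directly with $Y$, and one has to either work through $L^{2p}(\mathcal O)$ and pass to the limit or use a sub-/supersolution argument exploiting that $A$ is a realization of the Laplacian with Dirichlet or Neumann boundary conditions (so its semigroup preserves order). All the remaining steps are then fairly routine semigroup manipulations.
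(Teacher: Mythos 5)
The paper itself offers no proof of this proposition: it is stated as a summary of \cite[Proposition 6.2.2 and Theorem 6.2.3]{CER1}, and your reconstruction follows the same standard route as the cited proof (subtract the stochastic convolution and work pathwise, local solvability by truncation/fixed point, global existence and the bound $k(t)t^{-1/2m}$ via the dissipativity \eqref{superdis} and comparison with $y'=-ay^{2m+1}$, Lipschitz dependence by a one-sided Lipschitz bound plus Gronwall, and the $L^p$ statement from Hypothesis \ref{RDS1}\eqref{gamma}). You also correctly flag that the sup-norm differential inequality must be run through the subdifferential of the norm in $C(\CO)$ (or an $L^{2p}\to\infty$ limit), which is exactly how the cited argument handles it. Two small looseness points in the a priori bound: besides the increment $F(Y+W_A)-F(W_A)$ controlled by \eqref{superdis}, you must also absorb the term $F(W_A)$, which has growth of order $2m+1$ in $W_A$; it is this absorption, followed by the comparison argument, that produces the precise exponent $\max(\theta/(1+2m),1)$ in \eqref{stimaunif}, which your sketch does not quite track.

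The genuine gap is in your derivation of \eqref{liplip}. You claim that ``the polynomial growth of $\partial_z b$ together with the dissipativity gives'' the one-sided Lipschitz bound $(F(u)-F(v))(\xi)(u(\xi)-v(\xi))\le\eta|u(\xi)-v(\xi)|^2$, i.e.\ $\sup_{\xi,z}\partial_z b(\xi,z)\le\eta$. For $m=0$ this is true (the growth bound makes $\partial_z b$ bounded), but for $m\ge 1$ it does not follow from Hypotheses \ref{RDS1}\eqref{bbbb}: the growth assumptions only give $|\partial_z b(\xi,z)|\le C(1+|z|^{2m})$, and \eqref{superdis} cannot bound $\partial_z b$ from above by a constant because of the additive term $c(1+|z|^{\theta})$ — for instance, taking $h=1$ in \eqref{superdis} and Taylor-expanding with the bound on $\partial^2_z b$ only yields $\partial_z b(\xi,z)\le C(1+|z|^{\max(\theta,2m-1)})$, and one can cook up $b$ satisfying all the listed conditions with $\sup_z\partial_z b(\xi,z)=+\infty$. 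The uniform constant $\eta$ is really an additional structural ingredient of the cited framework (and is immediate for the model nonlinearities of Section \ref{RMKS}, where the leading term $-(2m+1)C_{2m+1}(\xi)z^{2m}$ dominates), not a consequence of \eqref{superdis} plus polynomial growth. As written, this step of your argument fails, and without it you get neither \eqref{liplip} nor, downstream, the contraction bounds of Proposition \ref{daviderompeilcazzo}; you should either assume the one-sided bound explicitly or restrict to a class of $b$ for which it can be verified.
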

\noindent We remark that $m\geq 1$ in the Hypothesis \ref{RDS1}\eqref{bbbb} then  \eqref{superdis} is needed to obtain \eqref{stimaunif}.

Using the mild solution $\mi{X}$ of \eqref{eqF02}, we define the family of bounded and linear operator $\{P(t)\}_{t\geq 0}$ on ${\rm BUC}(E)$ defined by
\[
P(t)\varphi(x):=\E[\varphi(X(t,x))]:=\int_\Omega\varphi(X(t,x))(\omega)\mathbb{P}(d\omega),\quad \varphi\in {\rm BUC}(E),\;x\in E,\;t\geq 0.
\]
In the same way of \cite[Proposition 3.3]{CDP12} it is possible to prove that $\{P(t)\}_{t\geq 0}$ is a weakly continuous semigroup on ${\rm BUC}(E)$ and define its weak generator. To this aim we need to recall the definition of $\mathscr{K}$-convergence as introduced in \cite{Cer94} (see also \cite[Appendix B]{CER1}).
\begin{defi}
A sequence $(\varphi_n)_{n\in\N}\subseteq {\rm BUC}(E)$ is $\mathscr{K}$-convergent to $\varphi\in{\rm BUC}(E)$ if
\begin{align}\label{K-conv}
\sup_{n\in\N}\|\varphi_n\|_\infty<+\infty,\qquad \lim_{n\ra+\infty}\sup_{x\in K}|\varphi_n(x)-\varphi(x)|=0;
\end{align}
for any compact subset $K$ of $E$. If \eqref{K-conv} holds true we write $\mathscr{K}\text{-}\lim_{n\ra +\infty}\varphi_n=\varphi$.
Moreover if $I\subseteq\R$ and $t_0$ is an accumulation point for $I$, we say that the family $\{\varphi_t\}_{t\in I}\subseteq {\rm BUC}(E)$ is $\mathscr{K}$-convergent to $\varphi\in{\rm BUC}(E)$ as $t$ approches $t_0$ and we write $\mathscr{K}\text{-}\lim_{t\ra t_0}\varphi_t=\varphi$,
if the sequence $(\varphi_{t_n})_{n\in\N}$ is $\mathscr{K}$-convergent to $\varphi$ as $n$ tends to infinity, for any sequence $(t_n)_{n\in\N}\subseteq I$ converging to $t_0$. 
\end{defi}

We are now able to give the definition of weakly continuous semigroup.

\begin{defi}
A semigroup of bounded linear operators $\{P(t)\}_{t\geq 0}$ on ${\rm BUC}(E)$ is said to be weakly continuous if
\begin{enumerate}
\item The family $\{P(t)\varphi\, |\, t\in [0,T]\} \subseteq {\rm BUC}(E)$ is equi-uniformly continuous, for any $\varphi\in {\rm BUC}(E)$ and $T>0$;

\item there exist $M>0$ and $w\in\R$ such that for every $t\geq 0$ it holds $\|P(t)\|_{\mathcal{L}({\rm BUC}(E))}\leq Me^{wt}$;

\item for every $\varphi\in {\rm BUC}(E)$ it holds $\mathscr{K}\text{-}\lim_{t\ra 0^+}P(t)\varphi=\varphi$.

\item for every $t\geq 0$, $\varphi\in {\rm BUC}(E)$ and sequence  $(\varphi_{n})_{n\in\N}\subseteq {\rm BUC}(E)$ which is $\mathscr{K}$-convergent to $\varphi$, it holds $\mathscr{K}\text{-}\lim_{n\ra+\infty}P(t)\varphi_n=P(t)\varphi$, where the limit is uniform with respect to $t\in[0,T]$ for any $T>0$.
\end{enumerate}
\end{defi}

The weak generator of $\{P(t)\}_{t\geq 0}$ (in the sense of \cite[Definition B.1.5]{CER1}) is the unique close operator $N:\Dom(N)\subseteq \rm{BUC}(E)\ra \rm{BUC}(E)$ such that
\[
R(\lambda,N)\varphi=\int^{+\infty}_0e^{-\lambda t}P(t)\varphi dt,\qquad \lambda>0,\ \varphi\in \rm{BUC}(E).
\]


\subsection{Regularity of the transition semigroup}

Exploiting \eqref{liplip} and the integral form of the transition semigroup $\{P(t)\}_{t\geq 0}$ we obtain the following result.

\begin{prop}\label{daviderompeilcazzo}
Assume that Hypotheses \ref{RDS1} hold true and let $\alpha\in (0,1)$. If $t\geq 0$, then
\[
\norm{P(t)}_{\mathcal{L}({\rm BUC}(E))}\leq 1,\qquad \norm{P(t)}_{\mathcal{L}(C_b^{\alpha}(E))}\leq e^{\alpha\eta t},\qquad \norm{P(t)}_{\mathcal{L}({\rm Lip}_b(E))}\leq e^{\eta t};
\] 
where $\eta\in\R$ is the constant appearing in \eqref{liplip}.
\end{prop}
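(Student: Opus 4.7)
All three bounds follow by pushing the corresponding regularity estimates on the mild solution $X(t,x)$ through the expectation defining $P(t)$, so the plan is essentially the same for each case.

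First I would handle the sup-norm bound: for $\varphi\in{\rm BUC}(E)$ and $x\in E$, Jensen's inequality (equivalently $\abs{\E Y}\leq \E\abs{Y}$) gives
\[
\abs{P(t)\varphi(x)} = \abs{\E[\varphi(X(t,x))]}\leq \E\bigl[\abs{\varphi(X(t,x))}\bigr]\leq \norm{\varphi}_\infty,
\]
whence $\norm{P(t)\varphi}_\infty\leq\norm{\varphi}_\infty$, proving the first estimate.

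Next I would estimate the Lipschitz seminorm. For $\varphi\in{\rm Lip}_b(E)$ and $x,y\in E$, using that $\omega\mapsto X(t,x)(\omega)$ and $\omega\mapsto X(t,y)(\omega)$ are defined on the same probability space, one can subtract inside the expectation and apply the Lipschitz property of $\varphi$ pointwise on $\Omega$:
\[
\abs{P(t)\varphi(x)-P(t)\varphi(y)}\leq \E\bigl[\abs{\varphi(X(t,x))-\varphi(X(t,y))}\bigr]\leq [\varphi]_{{\rm Lip}_b(E)}\,\E\bigl[\norm{X(t,x)-X(t,y)}_E\bigr].
\]
By estimate \eqref{liplip}, $\E[\norm{X(t,x)-X(t,y)}_E]\leq e^{\eta t}\norm{x-y}_E$, so $[P(t)\varphi]_{{\rm Lip}_b(E)}\leq e^{\eta t}[\varphi]_{{\rm Lip}_b(E)}$. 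Combined with $\norm{P(t)\varphi}_\infty\leq\norm{\varphi}_\infty$ this yields the third estimate.

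Finally, for the H\"older bound, the same strategy works, but one has to interpolate: raise \eqref{liplip} to the power $\alpha$, so that $\norm{X(t,x)-X(t,y)}_E^\alpha\leq e^{\alpha\eta t}\norm{x-y}_E^\alpha$ $\mathbb{P}$-a.e., and use the $\alpha$-H\"older property of $\varphi$ inside the expectation:
\[
\abs{P(t)\varphi(x)-P(t)\varphi(y)}\leq [\varphi]_{C_b^\alpha(E)}\,\E\bigl[\norm{X(t,x)-X(t,y)}_E^\alpha\bigr]\leq e^{\alpha\eta t}[\varphi]_{C_b^\alpha(E)}\norm{x-y}_E^\alpha.
\]
Taking the supremum and combining with the sup-norm estimate gives the second bound. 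None of these steps is a genuine obstacle; the only point demanding care is that $P(t)\varphi$ actually sits in ${\rm BUC}(E)$ (resp.\ in $C_b^\alpha(E)$, ${\rm Lip}_b(E)$), but this is immediate since the derived modulus of continuity estimates above already show $P(t)\varphi$ has the required global regularity, and boundedness follows from the sup-norm bound.
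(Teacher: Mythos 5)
Your argument is correct and is exactly the route the paper intends: the paper derives the proposition by "exploiting \eqref{liplip} and the integral form of the transition semigroup," i.e.\ the pathwise contraction-type estimate pushed through the expectation, with the H\"older case obtained by raising \eqref{liplip} to the power $\alpha$, just as you do. No gaps worth noting.
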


In the next proposition we summarize the results in \cite[Proposition 6.4.1 and Theorems 6.5.1]{CER1}.

\begin{prop}
Assume that Hypotheses \ref{RDS1} hold true. 
\begin{enumerate}[\rm (1)]
\item For any $t>0$ the map $E\ni x\mapsto X(t,x)$ belongs to $L^p((\Omega,\mathcal{F},\mathbb{P}); (E,\mathcal{B}(E)))$ and it is three times Gateaux differentiable. Moreover the following estimates hold true
\begin{align}\label{stimemild}
\sup_{x\in E}\int^t_0\|A^{\gamma/2}\D^j_GX(s,x)(h_1,\ldots,h_j)\|^2_{L^2(\mathcal{O})}ds\leq c_j(t)t^{1-\gamma}\prod^j_{i=1}\norm{h_i}^2_E,\qquad \mathbb{P}\text{-a.e.}
\end{align}
where $j=1,2,3$ and $h_1,\ldots,h_j\in E$. Here $\gamma\in [0,1]$ is the constant appearing in Hypothesis \ref{RDS1}\eqref{gamma} and $\{c_j(t)\}_{t\geq 0}$ are increasing processes having finite moments of any order.

\item If $t>0$ and $\varphi\in \rm{BUC}(E)$, then $P(t)\varphi\in {\rm BUC}^3(E)$, and for any $x,h\in E$
\begin{align}\label{BEL}
\D P(t)\varphi(x)h=\frac{1}{t}\E\left[\varphi(X(t,x))\int^t_0\langle A^{\gamma/2}\D^j_GX(s,x)h,dW(s)\rangle_{L^2(\mathcal{O})}\right].
\end{align}
Moreover for $j=0,1,2,3$ there exists a constant $C_j>0$ such that for any $t>0$, $x\in E$ and $\varphi\in{\rm BUC}(E)$ it holds
\begin{align}\label{Goku}
\|\J^jP(t)\varphi(x)\|_{\mathcal{L}^{(j)}(E;\R)}\leq C_j (\min\{1,t\})^{-j(1+\gamma)/2}\norm{\varphi}_{{\rm BUC}(E)}.
\end{align}
\end{enumerate}
\end{prop}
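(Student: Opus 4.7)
The proof splits naturally into two logically independent parts: (1) Gateaux differentiability of $X(t,x)$ together with the weighted $A^{\gamma/2}$-estimates, and (2) the Bismut-Elworthy-Li formula \eqref{BEL} with the consequent Fréchet regularity bounds \eqref{Goku}.

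For Part (1), I would formally differentiate the mild equation from Definition \ref{Mild1} in $x$ along a direction $h\in E$, obtaining the linear stochastic Volterra equation
\[
\D_G X(t,x)h=e^{tA}h+\int_0^t e^{(t-s)A}\J F(X(s,x))\D_G X(s,x)h\,ds,
\]
which is rigorously justified by a Picard iteration in an appropriate space of $E$-valued adapted processes. Convergence of the iteration uses the polynomial growth of $\D_z b$ from Hypothesis \ref{RDS1}\eqref{bbbb} together with the uniform a priori bound $\sup_x\norm{X(s,x)}_E\leq k(s)s^{-1/2m}$ of \eqref{stimaunif}, which is precisely where the dissipativity condition \eqref{superdis} enters when $m\geq 1$. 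Analogous equations for $\D_G^2 X$ and $\D_G^3 X$ are derived by further differentiation and treated in the same manner, each driven by lower-order derivatives through $\J^j F$. To prove \eqref{stimemild}, I would apply $A^{\gamma/2}$ to the Volterra identity, use the analytic semigroup bound $\norm{A^{\gamma/2}e^{sA}}_{\mathcal{L}(L^2(\mathcal{O}))}\leq Cs^{-\gamma/2}$, square and integrate over $[0,t]$, and close the estimate via a convolution-type Gronwall argument; the sharp power $t^{1-\gamma}$ arises from $\int_0^t s^{-\gamma}ds$.

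For Part (2), I would derive \eqref{BEL} by a Girsanov-type argument: perturbing the initial datum $x\mapsto x+\varepsilon h$ is equivalent in law at time $t$ to keeping the initial datum $x$ and adding the drift $\varepsilon\int_0^\cdot u(s)\,ds$ to $W$, where the control is chosen as $u(s):=\frac{1}{t}A^{\gamma/2}\D_G X(s,x)h$ so that the linearization in $\varepsilon$ matches $\D_G X(t,x)h$; differentiating the resulting density at $\varepsilon=0$ produces \eqref{BEL}. Then, taking absolute values in \eqref{BEL}, applying Cauchy-Schwarz together with the Itô isometry, and invoking \eqref{stimemild} with $j=1$ give
\[
|\D P(t)\varphi(x)h|\leq \frac{1}{t}\norm{\varphi}_\infty\Big(\E\int_0^t\norm{A^{\gamma/2}\D_G X(s,x)h}_{L^2(\mathcal{O})}^2 ds\Big)^{1/2}\leq Ct^{-(1+\gamma)/2}\norm{\varphi}_\infty\norm{h}_E,
\]
which is \eqref{Goku} for $j=1$ and $t\leq 1$. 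For $t>1$, the semigroup identity $P(t)=P(1)P(t-1)$ combined with Proposition \ref{daviderompeilcazzo} replaces $t$ by $1$, producing the $\min\{1,t\}$ factor.

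For $j=2,3$, I would iterate: split $P(t)=P(t/2)P(t/2)$, apply \eqref{BEL} to the outer factor and differentiate under the expectation using that $P(t/2)\varphi\in{\rm BUC}^1(E)$ from the previous step (differentiating inside requires the continuity in $x$ of the derivative processes from Part (1)); this multiplies the rate $t^{-(1+\gamma)/2}$ by $t^{-(1+\gamma)/2}$ per additional derivative, yielding the exponent $-j(1+\gamma)/2$. The ${\rm BUC}^j(E)$ regularity is then extracted directly from the BEL representation. The main obstacle is Part (1): proving Gateaux differentiability in the Banach space $E$ (not in a Hilbert space) with a polynomially growing nonlinearity, where one has to combine the a priori supremum bound \eqref{stimaunif} with a careful Picard scheme in the derivative equations; once that is available, Part (2) is a fairly standard application of the Elworthy-Bismut identity and the iteration technique.
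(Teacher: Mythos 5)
First, note that the paper does not prove this proposition at all: it is explicitly a summary of \cite[Proposition 6.4.1 and Theorem 6.5.1]{CER1}, so the benchmark is Cerrai's proof, and your outline does follow the same general strategy (linearized mild/variational equations for $\D_G^jX$, a Bismut--Elworthy--Li identity, and an iteration via the semigroup law to reach $j=2,3$). The passage from \eqref{BEL} to \eqref{Goku} via Cauchy--Schwarz, the It\^o isometry and \eqref{stimemild}, and the $\min\{1,t\}$ factor via $P(t)=P(1)P(t-1)$, are all correct.

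There is, however, a genuine gap in your derivation of \eqref{stimemild}, precisely at the point where the dependence on $\gamma$ is produced. You propose to apply $A^{\gamma/2}$ to the Volterra identity, use $\|A^{\gamma/2}e^{sA}\|_{\mathcal{L}(L^2(\mathcal{O}))}\leq Cs^{-\gamma/2}$, square, integrate and close with a Gronwall argument, claiming that $t^{1-\gamma}$ comes from $\int_0^t s^{-\gamma}\,ds$. This breaks down at the endpoint $\gamma=1$ (which is included in the statement and is needed for Theorems \ref{THM_reg_BUC}\eqref{THM_reg_BUC3} and \ref{Thm_Schauder_Rd}\eqref{Thm_Schauder_Rd1}--\eqref{Thm_Schauder_Rd2}): already the free term gives $\int_0^t\|A^{1/2}e^{sA}h\|^2_{L^2(\mathcal{O})}ds$, which your smoothing bound estimates by the divergent integral $\int_0^t s^{-1}ds$, and the squared convolution kernel $(s-r)^{-\gamma/2}$ likewise fails to be square integrable at $\gamma=1$; moreover the constant degenerates as $\gamma\to 1$. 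The finiteness of these quantities is not a consequence of analytic smoothing but of the dissipativity of $A$ in $L^2(\mathcal{O})$: one should use the energy identity $\tfrac12\tfrac{d}{ds}\|v(s)\|^2_{L^2}\leq-\|(-A)^{1/2}v(s)\|^2_{L^2}+c(s)\|v(s)\|^2_{L^2}$ for $v(s)=\D_GX(s,x)h$ to get the case $\gamma=1$, and then obtain the intermediate cases by the interpolation inequality $\|(-A)^{\gamma/2}v\|_{L^2}\leq\|v\|^{1-\gamma}_{L^2}\|(-A)^{1/2}v\|^{\gamma}_{L^2}$ together with H\"older's inequality in time, which is what produces $t^{1-\gamma}$ uniformly in $\gamma\in[0,1]$; this is essentially the route of \cite{CER1}. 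Two further points are asserted rather than proved in your sketch: the claim $P(t)\varphi\in{\rm BUC}^3(E)$ requires Fr\'echet (not merely Gateaux) differentiability with uniformly continuous derivatives, which does not follow ``directly from the BEL representation'' without a continuity/uniformity argument in $x$; and the Girsanov matching you describe (choosing $u(s)=\tfrac1tA^{\gamma/2}\D_GX(s,x)h$ so that ``the linearization matches $\D_GX(t,x)h$'') is not literally correct as stated --- the standard rigorous derivations go through regular approximations and It\^o's formula applied to $s\mapsto P(t-s)\varphi(X(s,x))$ combined with the Markov property, or require a more careful Cameron--Martin argument.
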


Combining \eqref{stimemild} and \eqref{BEL} we obtain the following result.

\begin{prop}
Assume that Hypotheses \ref{RDS1} hold true. For $j=1,2,3$ there exists a constant $K_j>0$ such that for any $t>0$, $x\in E$ and $\varphi\in {\rm BUC}^1(E)$
\begin{align}\label{GokuC}
\|\J^jP(t)\varphi(x)\|_{\mathcal{L}^{(j)}(E;\R)}\leq K_j (\min\{1,t\})^{-(j-1)(1+\gamma)/2}\norm{\varphi}_{{\rm BUC}^1(E)}.
\end{align}
\end{prop}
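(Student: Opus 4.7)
The plan is to combine the Bismut--Elworthy--Li formula \eqref{BEL}, the semigroup property $P(t)=P(s)\circ P(t-s)$, and the extra $C^1$ regularity of $\varphi$ in order to save one application of Bismut compared with the estimate \eqref{Goku}. Morally, one derivative is transferred directly onto $\varphi$ at no cost, while the remaining $j-1$ derivatives of the semigroup are handled via \eqref{BEL}, each contributing the familiar factor $(1+\gamma)/2$ in the exponent. For $t\geq 1$ the estimate is immediate from \eqref{Goku} combined with $\|\varphi\|_\infty\leq\|\varphi\|_{{\rm BUC}^1(E)}$, so I would restrict to $t\in(0,1)$ and argue by induction on $j\in\{1,2,3\}$.

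For $j=1$, I would differentiate $P(t)\varphi(x)=\E[\varphi(X(t,x))]$ directly under the expectation using $\varphi\in C^1$ and the Gateaux differentiability of $x\mapsto X(t,x)$, obtaining
\[
\J P(t)\varphi(x)h=\E[\D\varphi(X(t,x))\D_G X(t,x)h].
\]
The derivative form $\|\D_G X(t,x)h\|_E\leq e^{t\eta}\|h\|_E$ of \eqref{liplip} combined with $\|\D\varphi\|_\infty\leq\|\varphi\|_{{\rm BUC}^1(E)}$ then yields the $j=1$ case with no singular factor, matching the exponent $-(1-1)(1+\gamma)/2=0$.

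For $j=2$, I would write $P(t)=P(t/2)\circ P(t/2)$ and set $g:=P(t/2)\varphi\in{\rm BUC}^3(E)$; by the $j=1$ step $\|g\|_{{\rm BUC}^1(E)}\leq C\|\varphi\|_{{\rm BUC}^1(E)}$. Applying \eqref{BEL} to the outer $P(t/2)$ with the function $g$ expresses $\J P(t)\varphi(x)k$ as a Bismut-type stochastic integral. A second differentiation in $x$ in direction $h$ under the expectation (justified by Gateaux differentiability and moment bounds on $\D_G X(r,\cdot)$) produces, via Leibniz, two terms: one with $\D g(X(t/2,x))\D_G X(t/2,x)h$ times the Bismut integral in $k$, and one with $g(X(t/2,x))$ times a new stochastic integral involving $\D_G^2 X(r,x)(h,k)$. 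Bounding each by Cauchy--Schwarz, the It\^o isometry, \eqref{stimemild} (with $j=1$ and $j=2$), and the $j=1$ bound $\|\D g\|_\infty\leq C\|\varphi\|_{{\rm BUC}^1(E)}$ gives the desired $Ct^{-(1+\gamma)/2}\|\varphi\|_{{\rm BUC}^1(E)}\|h\|_E\|k\|_E$.

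For $j=3$, I would differentiate the $j=2$ representation of $\J^2 P(t)\varphi(x)(h,k)$ once more in a third direction $\ell$. The Leibniz rule produces a finite sum of terms of the schematic form
\[
\tfrac{1}{t}\,\E\Big[\J^a g(X(t/2,x))(\cdots)\cdot\int_0^{t/2}\langle A^{\gamma/2}\D_G^{c}X(r,x)(\cdots),dW(r)\rangle_{L^2(\mathcal{O})}\Big],
\]
with $a$, the orders of the remaining $\D_G X$ factors, and $c$ summing to $3$. The single critical term is the one with $a=2$: here I would invoke the \emph{just-proved} $j=2$ case of this proposition, namely $\|\J^2 g\|_\infty=\|\J^2 P(t/2)\varphi\|_\infty\leq K_2(t/2)^{-(1+\gamma)/2}\|\varphi\|_{{\rm BUC}^1(E)}$; combined with the $(t/2)^{(1-\gamma)/2}$ factor from \eqref{stimemild} and the prefactor $1/t$, this produces the target $t^{-(1+\gamma)}\|\varphi\|_{{\rm BUC}^1(E)}$. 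All remaining Leibniz terms have strictly weaker singular exponent and are absorbed.

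The main obstacle is the bookkeeping in the $j=3$ case, which is genuinely inductive on the $j=2$ case: every Leibniz term has to be matched to the appropriate prior estimate (\eqref{Goku} or the previously proved \eqref{GokuC} for $\J^a g$; \eqref{liplip} and its higher-order analogs for $\D_G^b X$; \eqref{stimemild} together with the It\^o isometry for the stochastic integrals), and the exchange of derivative with expectation at each step must be justified by uniform-in-$x$ moment bounds on $\D_G^k X(t,\cdot)$ for $k=1,2,3$ of the type underlying \eqref{BEL} and \eqref{stimemild} in the background of \cite[Chapter~6]{CER1}.
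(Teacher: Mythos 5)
Your proposal is correct and follows essentially the route the paper intends: the paper proves this proposition with the single remark that it follows by ``combining \eqref{stimemild} and \eqref{BEL}'', i.e.\ one derivative is placed directly on $\varphi$ while the remaining $j-1$ derivatives are produced by the Bismut--Elworthy formula and controlled through \eqref{stimemild} and the It\^o isometry, which is exactly what your semigroup splitting $P(t)=P(t/2)\circ P(t/2)$ and induction on $j$ carry out in detail. The auxiliary moment bounds on $\D_G^kX(t,\cdot)$ you invoke to justify differentiating under the expectation are indeed available in \cite[Chapter 6]{CER1}, so no gap remains.
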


Exploiting Proposition \ref{daviderompeilcazzo} and proceeding exactly as in \cite[Proposition 3.5 and (3.10)]{CDP12} we get the following result. 

\begin{prop}
Assume that Hypotheses \ref{RDS1} hold true. For $j=1,2,3$ there exists a constant $K'_j>0$ such that for any $t>0$, $x\in E$ and $\varphi\in {\rm Lip}_b(E)$
\begin{align}\label{Gokulip}
\|\J^jP(t)\varphi(x)\|_{\mathcal{L}^{(j)}(E;\R)}\leq K'_j (\min\{1,t\})^{-(j-1)(1+\gamma)/2}\norm{\varphi}_{{\rm Lip}_b(E)}.
\end{align}
\end{prop}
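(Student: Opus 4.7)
The plan is to exploit the semigroup property $P(t)=P(t/2)P(t/2)$: the first factor $P(t/2)$ will regularize $\varphi\in{\rm Lip}_b(E)$ into a function in ${\rm BUC}^1(E)$ with a ${\rm BUC}^1$-norm controlled by $\|\varphi\|_{{\rm Lip}_b(E)}$, after which the previously established estimate \eqref{GokuC} (for ${\rm BUC}^1$ data) can be applied to the second factor. This is the strategy of \cite[Proposition 3.5]{CDP12}, now carried out in the coloured-noise setting.

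For the range $t\geq 1$ nothing beyond \eqref{Goku} is needed: the factor $(\min\{1,t\})^{-j(1+\gamma)/2}=1$ and $\|\varphi\|_\infty\leq\|\varphi\|_{{\rm Lip}_b(E)}$, so \eqref{Goku} gives $\|\J^jP(t)\varphi(x)\|_{\mathcal{L}^{(j)}(E;\R)}\leq C_j\|\varphi\|_{{\rm Lip}_b(E)}$, which matches the right-hand side of \eqref{Gokulip} since $(\min\{1,t\})^{-(j-1)(1+\gamma)/2}=1$ as well.

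The substantive range is $t\in(0,1]$. Fix such $t$ and set $\psi:=P(t/2)\varphi$. The previously quoted regularization proposition gives $\psi\in{\rm BUC}^3(E)\subseteq{\rm BUC}^1(E)$. Since $\psi$ is Fréchet differentiable on the Banach space $E$, one has $\sup_{x\in E}\|\J\psi(x)\|_{\mathcal{L}(E;\R)}\leq[\psi]_{{\rm Lip}_b(E)}$, hence $\|\psi\|_{{\rm BUC}^1(E)}\leq\|\psi\|_{{\rm Lip}_b(E)}$. Combining this with Proposition \ref{daviderompeilcazzo} yields
\[
\|\psi\|_{{\rm BUC}^1(E)}\leq\|P(t/2)\varphi\|_{{\rm Lip}_b(E)}\leq e^{\eta t/2}\|\varphi\|_{{\rm Lip}_b(E)}\leq e^{|\eta|/2}\|\varphi\|_{{\rm Lip}_b(E)},\qquad t\in(0,1].
\]
Applying \eqref{GokuC} to $\psi\in{\rm BUC}^1(E)$ at time $t/2\leq 1/2$ and using the identity $P(t)\varphi=P(t/2)\psi$ gives
\[
\|\J^jP(t)\varphi(x)\|_{\mathcal{L}^{(j)}(E;\R)}\leq K_j(t/2)^{-(j-1)(1+\gamma)/2}\|\psi\|_{{\rm BUC}^1(E)}\leq K_j\,2^{(j-1)(1+\gamma)/2}e^{|\eta|/2}\,t^{-(j-1)(1+\gamma)/2}\|\varphi\|_{{\rm Lip}_b(E)}.
\]
Choosing $K'_j$ to be the maximum of $C_j$ (from the $t\geq 1$ case) and $K_j\,2^{(j-1)(1+\gamma)/2}e^{|\eta|/2}$ delivers \eqref{Gokulip} in both ranges simultaneously.

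I do not expect any deep obstacle. The only point requiring minor care is the elementary inequality $\sup_{x\in E}\|\J\psi(x)\|\leq[\psi]_{{\rm Lip}_b(E)}$ for a Fréchet differentiable function between Banach spaces, which allows one to dominate the ${\rm BUC}^1$-norm of the regularized function $P(t/2)\varphi$ purely in terms of its Lipschitz norm, so that Proposition \ref{daviderompeilcazzo} can be invoked verbatim without any additional estimate on second-order quantities.
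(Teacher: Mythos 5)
Your argument is correct and is essentially the proof the paper intends: the paper's one-line proof defers to Proposition \ref{daviderompeilcazzo} together with the argument of \cite[Proposition 3.5 and (3.10)]{CDP12}, which is exactly your splitting $P(t)=P(t/2)P(t/2)$, using that $P(t/2)\varphi$ is Lipschitz (hence, being Fr\'echet differentiable by the smoothing of the semigroup, lies in ${\rm BUC}^1(E)$ with $\|P(t/2)\varphi\|_{{\rm BUC}^1(E)}\leq\|P(t/2)\varphi\|_{{\rm Lip}_b(E)}\leq e^{\eta t/2}\|\varphi\|_{{\rm Lip}_b(E)}$) and then applying \eqref{GokuC} at time $t/2$. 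Your handling of the two time ranges and of the constants is fine, so no gaps remain.
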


We refer to \cite[Appendix A]{CDP12} for a proof of the following interpolation result.

\begin{thm}\label{Clip}
Assume Hypotheses \ref{RDS1} hold true and let $\vartheta\in (0,1)$. Up to an equivalent renorming, it holds $({\rm BUC}(E),{\rm Lip}_b(E))_{\vartheta,\infty}=C_b^\vartheta(E)$.
\end{thm}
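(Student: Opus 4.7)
The strategy is the standard $K$-functional characterization of real interpolation. Recall that $f\in ({\rm BUC}(E),{\rm Lip}_b(E))_{\vartheta,\infty}$ if and only if
\[
K(t,f):=\inf_{f=f_0+f_1}\bigl(\|f_0\|_\infty+t\,\|f_1\|_{{\rm Lip}_b(E)}\bigr),\qquad t>0,
\]
satisfies $[f]_{\vartheta,\infty}:=\sup_{t>0}t^{-\vartheta}K(t,f)<+\infty$, which also serves as the norm on the interpolation space (note that the space embeds continuously in ${\rm BUC}(E)+{\rm Lip}_b(E)={\rm BUC}(E)$, since ${\rm Lip}_b(E)\subseteq {\rm BUC}(E)$). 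It suffices therefore to show that $[\cdot]_{\vartheta,\infty}$ and $[\cdot]_{C_b^\vartheta(E)}$ are equivalent on ${\rm BUC}(E)$.

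For the embedding $({\rm BUC}(E),{\rm Lip}_b(E))_{\vartheta,\infty}\hookrightarrow C_b^\vartheta(E)$, I pick $x\neq y$ in $E$ and any near-optimal decomposition $f=f_0+f_1$ at the scale $t:=\|x-y\|_E$. The chain
\[
|f(x)-f(y)|\leq 2\|f_0\|_\infty+\|f_1\|_{{\rm Lip}_b(E)}\|x-y\|_E\leq 2\,K(\|x-y\|_E,f)\leq 2\,[f]_{\vartheta,\infty}\|x-y\|^\vartheta_E
\]
is immediate and yields the H\"older seminorm bound.

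For the reverse embedding I would use the Moreau--Yosida inf-convolution
\[
f_\lambda(x):=\inf_{y\in E}\bigl(f(y)+\lambda\|x-y\|_E\bigr),\qquad \lambda>0,
\]
which involves only the Banach norm of $E$ and hence works without any mollifier in the infinite-dimensional setting. A direct check gives $[f_\lambda]_{{\rm Lip}_b(E)}\leq\lambda$ and $\|f_\lambda\|_\infty\leq\|f\|_\infty$, while the crucial quantitative step is the error estimate
\[
0\leq f(x)-f_\lambda(x)=\sup_{y\in E}\bigl(f(x)-f(y)-\lambda\|x-y\|_E\bigr)\leq \sup_{r\geq 0}\bigl([f]_{C_b^\vartheta(E)}r^\vartheta-\lambda r\bigr)=C_\vartheta\,[f]_{C_b^\vartheta(E)}^{1/(1-\vartheta)}\lambda^{-\vartheta/(1-\vartheta)},
\]
obtained by a one-variable maximization in $r=\|y-x\|_E$, whose optimum is attained at $r_\ast=([f]_{C_b^\vartheta(E)}\vartheta/\lambda)^{1/(1-\vartheta)}$. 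Setting $f_1:=f_\lambda$, $f_0:=f-f_\lambda$ and choosing $\lambda$ of order $[f]_{C_b^\vartheta(E)}\,t^{\vartheta-1}$ to balance the two contributions in $K(t,f)$ then produces $K(t,f)\leq C\,\|f\|_{C_b^\vartheta(E)}\,t^\vartheta$ for $t\in(0,1]$; the range $t\geq 1$ is handled by the trivial bound $K(t,f)\leq\|f\|_\infty\leq\|f\|_\infty\,t^\vartheta$.

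The main obstacle is the sharpness of the exponent $\vartheta/(1-\vartheta)$ in the Moreau--Yosida error estimate: only with this exact rate does the balancing choice $\lambda\sim t^{\vartheta-1}$ reproduce the correct H\"older scaling $t^\vartheta$ in $K(t,f)$. Combining the two one-sided bounds then gives the equivalence of seminorms, and hence the claimed identification $({\rm BUC}(E),{\rm Lip}_b(E))_{\vartheta,\infty}=C_b^\vartheta(E)$ up to renorming.
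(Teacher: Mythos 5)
Your proof is correct: the two-sided $K$-functional argument (estimating $|f(x)-f(y)|$ via a near-optimal decomposition at scale $t=\|x-y\|_E$ for one inclusion, and the inf-convolution regularization with the balancing choice $\lambda\sim[f]_{C_b^\vartheta(E)}t^{\vartheta-1}$ for the other) is the standard proof of this identification and is essentially the argument of \cite[Appendix A]{CDP12}, to which the paper delegates the proof; the inf-convolution is indeed the right substitute for mollification in the infinite-dimensional setting. The only detail left implicit is the zeroth-order bound $\|f\|_\infty\le K(1,f)\le\sup_{t>0}t^{-\vartheta}K(t,f)$ (immediate since ${\rm Lip}_b(E)\subseteq{\rm BUC}(E)$ with $\|\cdot\|_\infty\le\|\cdot\|_{{\rm Lip}_b(E)}$), which is needed to control the full $C_b^\vartheta(E)$ norm and not only the H\"older seminorm.
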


We recall a classical interpolation result that we will use in the paper (see \cite[Theorem 1.12 of Chapter 5]{CR88} for a proof).

\begin{thm}\label{classic}
Let $\K_0,\K_1,\mathcal{H}_0,\mathcal{H}_1$ be Banach spaces such that $\mathcal{H}_0\subseteq \mathcal{K}_0$ and $\mathcal{H}_1\subseteq \mathcal{K}_1$ with continuous embeddings. If $T$ is a linear mapping such that $T:\K_0\ra\K_1$ and $T:\mathcal{H}_0\ra\mathcal{H}_1$ and for some $N_{\K},N_{\mathcal{H}}>0$ it hold
\begin{align*}
\|Tx\|_{\K_1}&\leq N_{\K}\|x\|_{\K_0},\qquad x\in \K_0;\\
\|Ty\|_{\mathcal{H}_1}&\leq N_{\mathcal{H}}\|y\|_{\mathcal{H}_0},\qquad y\in \mathcal{H}_0,
\end{align*}
then, for every $\vartheta\in(0,1)$, $T$ maps $(\K_0,\mathcal{H}_0)_{\vartheta,\infty}$ in $(\K_1,\mathcal{H}_1)_{\vartheta,\infty}$ and
\begin{align*}
\|Tx\|_{(\K_1,\mathcal{H}_1)_{\vartheta,\infty}}&\leq N_{\K}^{1-\vartheta}N_{\mathcal{H}}^{\vartheta}\|x\|_{(\K_0,\mathcal{H}_0)_{\vartheta,\infty}},\qquad x\in (\K_0,\mathcal{H}_0)_{\vartheta,\infty}.
\end{align*}
\end{thm}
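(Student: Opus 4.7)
The plan is to prove Theorem \ref{classic} via the $K$-functional realization of the real interpolation space $(\cdot,\cdot)_{\vartheta,\infty}$. For any compatible couple of Banach spaces $(\mathcal{K}_0,\mathcal{H}_0)$, recall that one sets
\[
K(t,x;\mathcal{K}_0,\mathcal{H}_0):=\inf\bigl\{\|x_0\|_{\mathcal{K}_0}+t\|x_1\|_{\mathcal{H}_0} : x=x_0+x_1,\ x_0\in\mathcal{K}_0,\ x_1\in\mathcal{H}_0\bigr\},\qquad t>0,
\]
and that, up to equivalent norms, $\|x\|_{(\mathcal{K}_0,\mathcal{H}_0)_{\vartheta,\infty}}=\sup_{t>0}t^{-\vartheta}K(t,x;\mathcal{K}_0,\mathcal{H}_0)$. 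I would use the same definition for the target couple $(\mathcal{K}_1,\mathcal{H}_1)$, which is legitimate since $\mathcal{H}_i\hookrightarrow\mathcal{K}_i$ by hypothesis.

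First, I would fix $x\in(\mathcal{K}_0,\mathcal{H}_0)_{\vartheta,\infty}$ and pick an arbitrary admissible splitting $x=x_0+x_1$ with $x_0\in\mathcal{K}_0$ and $x_1\in\mathcal{H}_0$. Applying the linear operator $T$ gives $Tx=Tx_0+Tx_1$ with $Tx_0\in\mathcal{K}_1$ and $Tx_1\in\mathcal{H}_1$, and the two assumed bounds yield, for any $s>0$,
\[
\|Tx_0\|_{\mathcal{K}_1}+s\|Tx_1\|_{\mathcal{H}_1}\leq N_{\mathcal{K}}\|x_0\|_{\mathcal{K}_0}+sN_{\mathcal{H}}\|x_1\|_{\mathcal{H}_0}=N_{\mathcal{K}}\bigl(\|x_0\|_{\mathcal{K}_0}+(sN_{\mathcal{H}}/N_{\mathcal{K}})\|x_1\|_{\mathcal{H}_0}\bigr).
\]
Taking the infimum over decompositions on both sides delivers the crucial comparison
\[
K(s,Tx;\mathcal{K}_1,\mathcal{H}_1)\leq N_{\mathcal{K}}\cdot K\!\left(sN_{\mathcal{H}}/N_{\mathcal{K}},\,x;\mathcal{K}_0,\mathcal{H}_0\right).
\]

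Second, I would multiply by $s^{-\vartheta}$ and perform the change of variables $t=sN_{\mathcal{H}}/N_{\mathcal{K}}$, so that $s^{-\vartheta}=(tN_{\mathcal{K}}/N_{\mathcal{H}})^{-\vartheta}=N_{\mathcal{K}}^{-\vartheta}N_{\mathcal{H}}^{\vartheta}t^{-\vartheta}$. The previous inequality becomes
\[
s^{-\vartheta}K(s,Tx;\mathcal{K}_1,\mathcal{H}_1)\leq N_{\mathcal{K}}^{1-\vartheta}N_{\mathcal{H}}^{\vartheta}\cdot t^{-\vartheta}K(t,x;\mathcal{K}_0,\mathcal{H}_0),
\]
and taking the supremum over $s>0$ (equivalently over $t>0$) gives precisely $\|Tx\|_{(\mathcal{K}_1,\mathcal{H}_1)_{\vartheta,\infty}}\leq N_{\mathcal{K}}^{1-\vartheta}N_{\mathcal{H}}^{\vartheta}\|x\|_{(\mathcal{K}_0,\mathcal{H}_0)_{\vartheta,\infty}}$, as claimed. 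There is no genuine obstacle here; the argument is completely standard for the real interpolation functor, and the geometric-mean constant $N_{\mathcal{K}}^{1-\vartheta}N_{\mathcal{H}}^{\vartheta}$ emerges automatically from the rescaling, which is the one quantitative step requiring care.
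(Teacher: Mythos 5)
Your K-functional argument is correct and coincides with the classical proof: the paper does not prove Theorem \ref{classic} itself but cites \cite[Theorem 1.12 of Chapter 5]{CR88}, where the argument is exactly the one you give, namely the pointwise comparison $K(s,Tx;\K_1,\mathcal{H}_1)\leq N_{\K}\,K(sN_{\mathcal{H}}/N_{\K},x;\K_0,\mathcal{H}_0)$ followed by the rescaling $t=sN_{\mathcal{H}}/N_{\K}$, which produces the exact constant $N_{\K}^{1-\vartheta}N_{\mathcal{H}}^{\vartheta}$. No gaps to report.
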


By \eqref{Goku}, \eqref{Gokulip}, Theorem \ref{Clip} and Theorem \ref{classic} (with $\vartheta=\alpha$, $\K_0,\K_1,\mathcal{H}_1={\rm BUC}(E)$, $\mathcal{H}_0={\rm Lip}_b(E)$, $\vartheta=\alpha$ and $T=\J_R^j P(t)$ for $j=1,2,3$) we obtain the following proposition which is the main result of this section.

\begin{prop}
Assume that Hypotheses \ref{RDS1} hold true and let $\alpha\in (0,1)$. There exists $c_\alpha>0$ such that for any $j=1,2,3$ and $\varphi\in C_b^\alpha(E)$ we have
\begin{align}\label{Goku_alpha}
\|\J^jP(t)\varphi(x)\|_{\mathcal{L}^{(j)}(E;\R)}\leq c_\alpha (\min\{1,t\})^{-(j-\alpha)(1+\gamma)/2}\norm{\varphi}_{C_b^\alpha(E)}.
\end{align}
\end{prop}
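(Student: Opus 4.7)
The plan is to deduce the bound \eqref{Goku_alpha} directly from the two endpoint estimates \eqref{Goku} and \eqref{Gokulip} by real interpolation. Fix $j\in\{1,2,3\}$, $t>0$ and vectors $h_1,\ldots,h_j\in E$, and consider the linear mapping
\[
T_{t,h_1,\ldots,h_j}:\varphi\longmapsto\bigl[\,x\mapsto \J^j P(t)\varphi(x)(h_1,\ldots,h_j)\,\bigr],
\]
which, since $P(t)\varphi\in{\rm BUC}^3(E)$, is bounded both from ${\rm BUC}(E)$ and from ${\rm Lip}_b(E)$ into ${\rm BUC}(E)$.

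By \eqref{Goku} its ${\rm BUC}(E)\to{\rm BUC}(E)$ norm is at most $C_j(\min\{1,t\})^{-j(1+\gamma)/2}\prod_i\|h_i\|_E$, while by \eqref{Gokulip} its ${\rm Lip}_b(E)\to{\rm BUC}(E)$ norm is at most $K_j'(\min\{1,t\})^{-(j-1)(1+\gamma)/2}\prod_i\|h_i\|_E$. I would then apply Theorem \ref{classic} with $\vartheta=\alpha$, $\K_0={\rm BUC}(E)$, $\mathcal{H}_0={\rm Lip}_b(E)$ and $\K_1=\mathcal{H}_1={\rm BUC}(E)$. Since the two target spaces coincide, $(\K_1,\mathcal{H}_1)_{\alpha,\infty}={\rm BUC}(E)$ trivially, whereas Theorem \ref{Clip} identifies the source interpolation space as $(\K_0,\mathcal{H}_0)_{\alpha,\infty}=C_b^\alpha(E)$.

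The interpolation inequality then reads
\[
\|T_{t,h_1,\ldots,h_j}\varphi\|_\infty \leq N_\K^{1-\alpha}\,N_\mathcal{H}^\alpha\,\|\varphi\|_{C_b^\alpha(E)},
\]
and a short computation shows that the exponent of $(\min\{1,t\})$ in $N_\K^{1-\alpha}N_\mathcal{H}^\alpha$ combines to $-(1+\gamma)/2\cdot[(1-\alpha)j+\alpha(j-1)]=-(j-\alpha)(1+\gamma)/2$, which is exactly the exponent appearing in \eqref{Goku_alpha}. Taking the supremum over unit vectors $h_1,\ldots,h_j\in E$ upgrades the pointwise estimate to a bound on $\|\J^j P(t)\varphi(x)\|_{\mathcal{L}^{(j)}(E;\R)}$ uniform in $x$, with $c_\alpha:=\max_{j=1,2,3}C_j^{1-\alpha}(K_j')^\alpha$. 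No conceptual obstacle arises: the argument is a bookkeeping exercise once both endpoint estimates are in place, and the only point deserving care is the arithmetic that produces the exponent $(j-\alpha)(1+\gamma)/2$ in the final form the authors will use downstream.
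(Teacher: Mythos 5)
Your proposal is correct and follows essentially the same route as the paper: the authors also obtain \eqref{Goku_alpha} by applying Theorem \ref{classic} with $\vartheta=\alpha$, $\K_0={\rm BUC}(E)$, $\mathcal{H}_0={\rm Lip}_b(E)$, $\K_1=\mathcal{H}_1={\rm BUC}(E)$ and $T=\J^j P(t)$, combining \eqref{Goku}, \eqref{Gokulip} and the identification $({\rm BUC}(E),{\rm Lip}_b(E))_{\alpha,\infty}=C_b^\alpha(E)$ from Theorem \ref{Clip}. Your exponent computation $(1-\alpha)j+\alpha(j-1)=j-\alpha$ matches, the only cosmetic difference being that you fix directions $h_1,\ldots,h_j$ before interpolating (and the constant $c_\alpha$ additionally absorbs the equivalent-renorming constant from Theorem \ref{Clip}).
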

%
%
%

\begin{rmk}
If Hypotheses \ref{RDS1} holds true, then it is possible to prove that for any $x\in L^2(\mathcal{O})$ the stochastic partial differential equation \eqref{eqF02} has a unique generalized mild solution $\mi{X}$ (see \cite[Proposition 7.1.2]{CER1}). Moreover using the generalized mild solution $\mi{X}$ it is possible to define a transition semigroup $\{P^{L^2(\mathcal{O})}(t)\}_{t\geq 0}$ associated to \eqref{eqF02} on ${\rm BUC}(L^2(\mathcal{O}))$, and also $\{P^{L^2(\mathcal{O})}(t)\}_{t\geq 0}$ is weakly continuous (see \cite[Section 7.1]{CER1}). However we are not aware of the existence of estimates similar to \eqref{Goku} and \eqref{GokuC} for $\{P^{L^2(\mathcal{O})}(t)\}_{t\geq 0}$.
\end{rmk}

\section{Proofs of the main results}

This section is devoted to the proof the main results of this paper.

\subsection{Stationary equation}\label{proofstaz}
We start by proving Schauder regularity results for the function $u$ introduced in \eqref{uintro}.

\subsubsection{The case $f\in{\rm BUC}(E)$} First of all we analize the regularity of the function $u$ when $f$ is just a real-valued, bounded and uniformly continuous function.

\begin{proof}[Proof of Theorem \ref{THM_reg_BUC}]
We start by proving that if $\gamma\in[0,1)$, then $u\in{\rm BUC}^1(E)$. By \eqref{Goku} (with $j=1$) we get that for any $x,h\in E$ and $t>0$
\begin{align}
|P(t)f(x+h)-P(t)f(x)-\D P(t)f(x)h|&=\abs{\int_0^1(\D P(t)f(x+\sigma h)-\D P(t)f(x))hd\sigma}\notag\\
&\leq 2C_1 (\min\{1,t\})^{-(1+\gamma)/2}\|h\|_E\norm{f}_{\infty}.\label{save1}
\end{align}
Hence, by \eqref{save1} and the dominated convergence theorem we get that $u$ is Fr\'echet differentiable and for every $\lambda,t>0$ and $x\in E$ it holds
\begin{align*}
\D u(x)&=\int_0^{+\infty}e^{-\lambda t}\D P(t)f(x)dt.
\end{align*}
Using the same arguments used in the proof of \eqref{save1} we obtain 
\begin{align}\label{exam1}
\|\D u(x)\|_{\mathcal{L}(E;\R)}&\leq C_1\pa{\frac{2}{1-\gamma}+\frac{1}{\lambda}}\|f\|_{\infty}.
\end{align}
The fact that $u\in {\rm BUC}^1(E)$ follows by Proposition \ref{Zone400} (with $\mathcal{M}=E$, $d_{\mathcal{M}}=\norm{\cdot}_E$, $Y=(0,+\infty)$, $\mu$ is the measure $e^{-\lambda t}dt$ and $Z$ is $\mathcal{L}(E;\R)$).

We can now show \eqref{THM_reg_BUC1}. If $x,h\in E$ are such that $\|h\|_E\geq 1$, then by \eqref{exam1}, it holds
\begin{align}
\|\D u(x+2h)-2\D u(x+h)-\D u(x)\|_{\mathcal{L}(E;\R)}&\leq 4C_1\pa{\frac{2}{1-\gamma}+\frac{1}{\lambda}}\|f\|_{\infty}\notag\\
&\leq 4C_1\pa{\frac{2}{1-\gamma}+\frac{1}{\lambda}}\|h\|_E\|f\|_{\infty}.\label{caso>1}
\end{align}
Now we are going to study the case $\|h\|_E<1$. Consider the functions $a,b:E\ra\R$ defined, for any $x,h\in E$, as
\begin{align*}
a_1(x):=\int_0^{\norm{h}_E^2}e^{-\lambda t} \D P(t)f(x)dt;\qquad b_1(x):=\int_{\norm{h}_E^2}^{+\infty}e^{-\lambda t}\D P(t)f(x)dt.
\end{align*}
For any $x,h\in E$ with $\|h\|_E<1$ and $t>0$, by \eqref{Goku} (with $j=1$), we have 
\begin{align}
\|a_1(x+2h)&-2a_1(x+h)+a_1(x)\|_{\mathcal{L}(E;\R)}\notag\\
&\leq\int_0^{\|h\|_E}e^{-\lambda t}\|\D P(t)f(x+2h)-2\D P(t)f(x+h)+\D P(t)f(x)\|_{\mathcal{L}(E;\R)}dt\notag\\
&\leq 8\|h\|_E\|f\|_\infty.\label{BA_1}
\end{align}
Before proceeding we need some intermediate estimates. First of all observe that for every $x,h\in E$ and $t>0$
\begin{align}
\D P(t)f(x+2h)-\D P(t)f(x+h) &=\int_0^1\D^2_RP(t)f(x+(1+\sigma)h)(h,\cdot)d\sigma;\label{BA_2}\\
\D P(t)f(x+h)-\D P(t)f(x) &=\int_0^1\D^2_R P(t)f(x+\sigma h)(h,\cdot)d\sigma.\label{BA_3}
\end{align}
So combining \eqref{BA_2} and \eqref{BA_3} and we get
\begin{align}\label{BA_5}
\D P(t)f(x+2h)-2\D P(t)f(x+h)+\D P(t)f(x)=\int_0^1\int_0^1\D^3P(t)f(x+(\tau+\sigma)h)(h,h,\cdot)d\tau d\sigma.
\end{align}
By \eqref{Goku} (with $j=3$), \eqref{BA_5} and recalling that there exists $K(\lambda)>0$ such that for any $t>0$, it holds $t^{3/2} e^{-\lambda t}\leq K(\lambda)$, we get 
\begin{align}
\|b_1(x+2h)&-2b_1(x+h)+b_1(x)\|_{\mathcal{L}(E;\R)}\notag\\
&\leq\int_{\|h\|_E^2}^{+\infty}e^{-\lambda t}\|\D P(t)f(x+2h)-2\D P(t)f(x+h)+\D P(t)f(x)\|_{\mathcal{L}(E;\R)} dt\notag\\
&\leq 2C_3(1+K(\lambda))\|h\|_E\Vert f\Vert_{\infty}.\label{BA_6}
\end{align}
Combining \eqref{caso>1}, \eqref{BA_1} and \eqref{BA_6} we get \eqref{THM_reg_BUC1}. 

To prove \eqref{THM_reg_BUC2}, let $x,h\in E$ be such that $\|h\|_E\geq 1$. By \eqref{exam1}, it holds
\begin{align}
\|\D u(x+h)-\D u(x)\|_{\mathcal{L}(E;\R)}&\leq 2C_1\pa{\frac{2}{1-\gamma}+\frac{1}{\lambda}}\|f\|_{\infty}\leq 2C_1\pa{\frac{2}{1-\gamma}+\frac{1}{\lambda}}\|h\|^{(1-\gamma)/(1+\gamma)}_E\|f\|_{\infty}.\label{caso>1,2}
\end{align}
Now we are going to study the case $\|h\|_E<1$. Let $x,h\in E$ be such that $\|h\|_E<1$, and consider the functions $a_{1,\gamma},b_{1,\gamma}:E\ra\R$ defined as
\begin{align*}
a_{1,\gamma}(x):=\int_0^{\|h\|_E^{2/(1+\gamma)}}e^{-\lambda t}\D P(t)f(x)dt;\qquad b_{1,\gamma}(x):=\int_{\|h\|_E^{2/(1+\gamma)}}^{+\infty}e^{-\lambda t}\D P(t)f(x)dt.
\end{align*}
Observe that by \eqref{Goku} (with $j=1$) it holds
\begin{align}
\|a_{1,\gamma}(x+h)-a_{1,\gamma}(x)\|_{\mathcal{L}(E;\R)}&\leq \int_0^{\|h\|_E^{2/(1+\gamma)}}e^{-\lambda t}\|\D P(t)f(x+h)-\D P(t)f(x)\|_{\mathcal{L}(E;\R)}dt\notag\\
&\leq 2C_1\int_0^{\|h\|_E^{2/(1+\gamma)}}t^{-(1+\gamma)/2} dt=\frac{2C_1}{1-\gamma}\|h\|^{(1-\gamma)/(1+\gamma)}\|f\|_\infty. \label{BA_3/2}
\end{align}
Furthermore by \eqref{Goku} (with $j=2$) and the fact that there exists $K(\gamma,\lambda)>0$ such that, for any $t>0$, it holds $t^{(1+\gamma)}e^{-\lambda t}\leq K(\gamma,\lambda)$ we get
\begin{align}
\|b_{1,\gamma}(x+h)-b_{1,\gamma}(x)\|_{\mathcal{L}(E;\R)}&\leq \int_{\|h\|_E^{2/(1+\gamma)}}^{+\infty}e^{-\lambda t}\|\D P(t)f(x+h)-\D P(t)f(x)\|_{\mathcal{L}(E;\R)}dt\notag\\
&\leq \int_{\|h\|_E^{2/(1+\gamma)}}^{+\infty}e^{-\lambda t}\norm{\int_0^1\D^2 P(t)f(x+\sigma h)(h,\cdot) d\sigma}_{\mathcal{L}(E;\R)}dt\notag\\
&\leq C_2\|h\|_E\|f\|_\infty\pa{\int_{\|h\|_E^{2/(1+\gamma)}}^{+\infty}e^{-\lambda t}(\min\{1,t\})^{-(1+\gamma)}dt}\notag\\
&\leq C_2\|h\|_E\|f\|_\infty\pa{\int_{\|h\|_E^{2/(1+\gamma)}}^{+\infty}t^{-(1+\gamma)/2}dt+\int_{1}^{+\infty}e^{-\lambda t}dt}\notag\\
&=\frac{2C_2}{\gamma}(1+K(\gamma,\lambda))\|h\|_E^{(1-\gamma)/(1+\gamma)}\|f\|_\infty.\label{BA_4}
\end{align}
Now combining \eqref{caso>1,2}, \eqref{BA_3/2} and \eqref{BA_4} we get \eqref{THM_reg_BUC2}. \eqref{THM_reg_BUC3} follows using arguments similar to those used to prove \eqref{THM_reg_BUC1}, so its proof is left to the reader.
\end{proof}

\subsubsection{The case $f\in C_b^\alpha(E)$}
Now we can prove Theorem \ref{Thm_Schauder_Rd}.

\begin{proof}[Proof of Theorem \ref{Thm_Schauder_Rd}]
We start by proving that if $\gamma\in[0,1]$, then $u\in{\rm BUC}^1(E)$. By \eqref{Goku_alpha} (with $j=1$) we get that for any $x,h\in E$ and $t>0$
\begin{align}
|P(t)f(x+h)-P(t)f(x)-\D P(t)f(x)h|&=\abs{\int_0^1(\D P(t)f(x+\sigma h)-\D P(t)f(x))hd\sigma}\notag\\
&\leq 2c_\alpha (\min\{1,t\})^{-(1-\alpha)(1+\gamma)/2}\|h\|_E\norm{f}_{C_b^\alpha(E)}.\label{AMA1}
\end{align}
Hence, by \eqref{AMA1} and the dominated convergence theorem we get that $u$ is Fr\'echet differentiable and for every $\lambda,t>0$ and $x\in E$ it holds
\begin{align*}
\D u(x)&=\int_0^{+\infty}e^{-\lambda t}\D P(t)f(x)dt.
\end{align*}
Using the same arguments used in the proof of \eqref{AMA1} we obtain 
\begin{align*}
\|\D_R u(x)\|_{\mathcal{L}(E;\R)}&\leq c_\alpha\pa{\frac{2}{2-(1-\alpha)(1+\gamma)}+\frac{1}{\lambda}}\|f\|_{C_b^\alpha(E)}.
\end{align*}
The fact that $u\in {\rm BUC}^1(E)$ follows by Proposition \ref{Zone400} (with $\mathcal{M}=E$, $d_{\mathcal{M}}=\norm{\cdot}_E$, $Y=(0,+\infty)$, $\mu$ is the measure $e^{-\lambda t}dt$ and $Z$ is $\mathcal{L}(E;\R)$).

Now we show that if $\gamma\in[0,1)$ and $\alpha>\frac{2\gamma}{1+\gamma}$, then $u\in{\rm BUC}^2(E)$. By \eqref{Goku_alpha} (with $j=2$) we get that for any $x,h\in E$ and $t>0$
\begin{align}
\|\D P(t)f(x+h)-&\D P(t)f(x)-\D^2 P(t)f(x)(h,\cdot)\|_{\mathcal{L}(E;\R)}\notag\\
&=\norm{\int_0^1(\D^2_RP(t)f(x+\sigma h)-\D^2_RP(t)f(x))(h,\cdot)d\sigma}_{\mathcal{L}(E;\R)}\notag\\
&\leq 2c_\alpha (\min\{1,t\})^{-(2-\alpha)(1+\gamma)/2}\|h\|_E\norm{f}_{C_b^\alpha(E)}.\label{AMA2}
\end{align}
Hence, by \eqref{AMA2} and the dominated convergence theorem we get that $u$ is Fr\'echet differentiable and for every $\lambda,t>0$ and $x\in E$ it holds
\begin{align*}
\D^2 u(x)&=\int_0^{+\infty}e^{-\lambda t}\D^2 P(t)f(x)dt.
\end{align*}
Using the same arguments used in the proof of \eqref{AMA2} we obtain 
\begin{align*}
\|\D^2_R u(x)\|_{\mathcal{L}^{(2)}(E;\R)}&\leq c_\alpha\pa{\frac{2}{2-(2-\alpha)(1+\gamma)}+\frac{1}{\lambda}}\|f\|_{C_b^\alpha(E)}.
\end{align*}
The fact that $u\in {\rm BUC}^2(E)$ follows by Proposition \ref{Zone400} (with $\mathcal{M}=E$, $d_{\mathcal{M}}=\norm{\cdot}_E$, $Y=(0,+\infty)$, $\mu$ is the measure $e^{-\lambda t}dt$ and $Z$ is $\mathcal{L}^{(2)}(E;\R)$).

Now we can prove \eqref{Thm_Schauder_Rd1}. Let $\lambda>0$ and $x,h\in E$ such that $\|h\|_E<1$ and consider the functions
\begin{align*}
a_{1,\gamma}(x):=\int_0^{\|h\|_E^{2/(1+\gamma)}}e^{-\lambda t}\D P(t)f(x)dt;\qquad b_{1,\gamma}(x):=\int^{+\infty}_{\|h\|_E^{2/(1+\gamma)}}e^{-\lambda t}\D P(t)f(x)dt.
\end{align*}
Now by \eqref{Goku_alpha} (with $j=1$) it holds
\begin{align}
\|a_{1,\gamma}(x+h)-a_{1,\gamma}(x)\|_{\mathcal{L}(E;\R)}&\leq 2c_\alpha\|f\|_{C_b^\alpha(E)}\int_0^{\|h\|_E^{2/(1+\gamma)}}t^{-(1-\alpha)(1+\gamma)/2}dt\notag\\
& =\frac{4c_\alpha}{2-(1-\alpha)(1+\gamma)}\|h\|_E^{\alpha+(1-\gamma)/(1+\gamma)}\|f\|_{C_b^\alpha(E)}.\label{community1}
\end{align}
Now using \eqref{Goku_alpha} (with $j=2$) and arguing in the same way as in the proof of Theorem \ref{THM_reg_BUC}, we get that there exists $K=K(\lambda,\gamma,\alpha)>0$ such that
\begin{align}
\|b_{1,\gamma}(x+h)-b_{1,\gamma}(x)\|_{\mathcal{L}(E;\R)}&\leq K\|h\|_E\|f\|_{C_b^\alpha(E)}\int_{\|h\|_E^{2/(1+\gamma)}}^{+\infty}t^{-(2-\alpha)(1+\gamma)/2}dt\notag\\
&=\frac{2K}{(2-\alpha)(1+\gamma)-2}\|h\|_E^{\alpha+(1-\gamma)/(1+\gamma)}\|f\|_{C_b^\alpha(E)}.\label{community2}
\end{align}
Combining \eqref{community1} and \eqref{community2} we get \eqref{Thm_Schauder_Rd1}, indeed the case $\|h\|_E\geq 1$ can be obtained using arguments similar to those used in the proof of \eqref{caso>1,2}. The proofs of \eqref{Thm_Schauder_Rd2} and \eqref{Thm_Schauder_Rd3} are similar to the one in Theorem \ref{THM_reg_BUC} and are left to the reader.
\end{proof}

\subsection{The evolution equation}\label{proofevol}
In this section we study the regularity of the mild solution of \eqref{evol_prob}, namely the function
\begin{align}\label{mild_evol}
v(t,x):= P(t)f(x)+\int_0^t P(t-s)g(s,\cdot)(x)ds,\qquad T>0,\ t\in[0,T],\ x\in E.
\end{align}
To do so we will analyze the regularity of the two summands on the right hand side of \eqref{mild_evol}. Before all of that we need to introduce the spaces we will use in this section.

\begin{defi}
Let $Y$ be a Banach space with norm $\norm{\cdot}_Y$. For any $\alpha\in(0,1)$, $k=0,1,2,\ldots$ and $T>0$ we define ${\rm BUC}^{0,k+\alpha}([0,T]\times E; Y)$ as the set of continuous functions $g:[0,T]\times E\ra Y$, that are separately uniformly continuous and such that 
\begin{align*}
\|g\|_{{\rm BUC}^{0,k+\alpha}([0,T]\times E;Y)}:=\sup_{t\in[0,T]}\|g(t,\cdot)\|_{{\rm BUC}^{k+\alpha}(E;Y)}<+\infty
\end{align*}
If $Y=\R$ we write ${\rm BUC}^{0,k+\alpha}([0,T]\times E)$.
\end{defi}

\noindent For any $T>0$, $\alpha\in(0,1)$ and $k=0,1,2,\ldots$ the space ${\rm BUC}^{0,k+\alpha}([0,T]\times E; Y)$ is a Banach space if endowed with the norm $\norm{\cdot}_{{\rm BUC}^{0,k+\alpha}([0,T]\times E;Y)}$.

The following result says that the function $P(t)f(x)$ has the ``same'' regularty of $f$.

\begin{prop}
Assume Hypotheses \ref{RDS1} hold true. For every $T>0$, $\alpha\in(0,1)$, $k=0,1,2,3$ and $f\in C_b^{k+\alpha}(E)$, the map $(t,x)\mapsto P(t)f(x)$ belongs to ${\rm BUC}^{0,k+\alpha}([0,T]\times E)$.
\end{prop}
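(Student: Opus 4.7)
The plan is to reduce everything to the uniform-in-time estimate $\sup_{t\in[0,T]}\|P(t)f\|_{C_b^{k+\alpha}(E)}<+\infty$; joint continuity of $(t,x)\mapsto P(t)f(x)$ and its separate uniform continuity then follow from this bound (which yields equi-uniform $x$-continuity of the family $\{P(t)f\}_{t\in[0,T]}$) combined with pointwise continuity in $t$, the latter coming from the $\mathscr{K}$-convergence at $0$ and the semigroup law applied on a dense subset of $C_b^{k+\alpha}(E)$.

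For $k=0$ the estimate is immediate from Proposition \ref{daviderompeilcazzo}, which gives $\|P(t)\|_{\mathcal{L}(C_b^\alpha(E))}\leq e^{\alpha\eta T}$ on $[0,T]$. For $k=1,2,3$ I would differentiate $P(t)f(x)=\E[f(X(t,x))]$ under the expectation by exploiting the three-times Gateaux differentiability of $x\mapsto X(t,x)$ (the proposition containing \eqref{stimemild}) combined with the chain rule in Fa\`a di Bruno form
$$\J^k(P(t)f)(x)(h_1,\ldots,h_k)=\sum_{\pi}\E\!\left[\J^{|\pi|}f(X(t,x))\bigl(\D_G^{|B_1|}X(t,x)(h_{B_1}),\ldots,\D_G^{|B_{|\pi|}|}X(t,x)(h_{B_{|\pi|}})\bigr)\right],$$
the sum running over set partitions $\pi$ of $\{1,\ldots,k\}$ with blocks $B_j$. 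The $E$-norm bound $\|\D_GX(t,x)\|_{\mathcal{L}(E)}\leq e^{t\eta}$ is extracted from \eqref{liplip}, while pointwise $E$-norm bounds on $\D_G^2X$ and $\D_G^3X$, and their continuous dependence on $x$, come from the variational equations they satisfy under Hypotheses \ref{RDS1}. These furnish the ${\rm BUC}^k(E)$ part of the norm, and the $\alpha$-Hölder seminorm of $\J^k(P(t)f)$ follows by combining the $\alpha$-Hölder continuity of $\J^{|\pi|}f$ with \eqref{liplip} and the continuity of the flow derivatives in $x$.

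A cleaner alternative I would pursue if the pointwise higher-order flow bounds become cumbersome is interpolation: the same chain-rule argument, performed only with integer-order derivatives, gives uniform-in-$t$ boundedness of $P(t)$ on ${\rm BUC}^k(E)$ and on ${\rm Lip}_b(E)$-type spaces of order $k+1$; Theorem \ref{classic} then upgrades these bounds to $C_b^{k+\alpha}(E)$ for intermediate $\alpha$, using the interpolation identity underlying Theorem \ref{Clip}. The main obstacle in either route is precisely obtaining $E$-valued pointwise control on $\D_G^2X$ and $\D_G^3X$, since the estimates stated in the paper are time-integrated and cast in the $A^{\gamma/2}$-norm \eqref{stimemild}; the interpolation route is attractive because it demands the least additional regularity from the flow at each step.
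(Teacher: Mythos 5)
For $k=0$ your argument is essentially the paper's: the uniform bound $\sup_{t\in[0,T]}\|P(t)f\|_{C_b^{\alpha}(E)}\leq\max\{1,e^{\alpha\eta T}\}\|f\|_{C_b^{\alpha}(E)}$ from Proposition \ref{daviderompeilcazzo}, plus continuity in $t$ on the compact interval (the paper gets the uniform $x$-continuity from Proposition \ref{Zone400}, you get it from the H\"older seminorm bound; both are fine). Your detour through $\mathscr{K}$-convergence and ``a dense subset of $C_b^{k+\alpha}(E)$'' for the $t$-continuity is unnecessary and the density claim is dubious in infinite dimensions; continuity of $t\mapsto P(t)f(x)$ for fixed $x$ follows directly from pathwise continuity of the mild solution and dominated convergence, which is all that is needed.

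The real issue is $k=1,2,3$, which the paper dismisses with ``similar arguments''. Your Fa\`a di Bruno route needs two quantitative inputs that you only gesture at: (a) pointwise $E$-valued bounds $\sup_{x\in E}\|\D^j_GX(t,x)(h_1,\ldots,h_j)\|_E\leq c(t)\prod_i\|h_i\|_E$ for $j=2,3$, together with their dependence on $x$ --- as you correctly note, the paper only records the time-integrated, $A^{\gamma/2}$-weighted estimates \eqref{stimemild}, so these must be imported from \cite{CER1}; and (b) for the H\"older seminorm of $\J^k P(t)f$, ``continuity of the flow derivatives in $x$'' is not enough: the difference $\J^kP(t)f(x)-\J^kP(t)f(y)$ requires Lipschitz dependence on $x$ of the lower-order flow derivatives and $\alpha$-H\"older dependence of the top-order one. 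For $k=1,2$ this can be extracted from the existence and boundedness of the next flow derivative, but for $k=3$ it would require $\alpha$-H\"older continuity of $\D^3_GX(t,\cdot)$, which does not follow from Hypotheses \ref{RDS1} ($b$ is only $C^3$ in $z$, so the third flow derivative is merely continuous in $x$). Your interpolation fallback does not repair this: reaching $C_b^{3+\alpha}(E)$ via Theorem \ref{classic} would require uniform boundedness of $P(t)$ on a space of order $4$, i.e.\ a fourth derivative of the flow, which is likewise unavailable. So, granting the pointwise flow estimates of \cite{CER1}, your sketch covers $k=0,1,2$, but the case $k=3$ is a genuine gap in the proposal (one that, admittedly, the paper's one-line remark does not address either).
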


\begin{proof}
We just show the case $k=0$, since the other cases follow by similar arguments. Observe that the fact that, for every $t\in[0,T]$, the map $x\mapsto P(t)f(x)$ is uniformly continuous follows by Proposition \ref{Zone400} (with $\mathcal{M}=E$, $d_{\mathcal{M}}=\norm{\cdot}_E$, $Y=\Omega$, $\mu=\mathbb{P}$ and $Z=\R$) and the boundedness and uniform continuity of $f$, while the uniform continuity of $t\mapsto P(t)f(x)$ follows by its continuity and the compactness of $[0,T]$. Let $h(t,x):=P(t)f(x)$ for $t\in[0,T]$ and $x\in E$. By Proposition \ref{daviderompeilcazzo} we obtain
\begin{align*}
\|h\|_{{\rm BUC}^{0,\alpha}([0,T]\times E)}&=\sup_{t\in [0,T]}(\|P(t)f\|_\infty+[P(t)f]_{\alpha})\leq \max\{1,e^{\alpha\eta T}\}\|f\|_{{\rm BUC}^\alpha(E)},
\end{align*}
hence the map $(t,x)\mapsto P(t)f(x)$ belongs to ${\rm BUC}^{0,\alpha}([0,T]\times E)$. This concludes the proof.
\end{proof}

The following theorem study the regularity of the second summand in \eqref{mild_evol} in the case $g\in {\rm BUC}^{0,0}([0,T]\times E)$.

\begin{thm}\label{THM_evol_reg_BUC}
Assume Hypotheses \ref{RDS1} hold true. Let $T>0$, $g\in {\rm BUC}^{0,0}([0,T]\times E)$ and consider the function
\begin{align*}
v_2(t,x):=\int_0^tP(s)g(t-s,\cdot)(x)ds,\qquad t\in[0,T],\ x\in E.
\end{align*}
It holds
\begin{enumerate}[\rm (i)]
\item\label{THM_evol_reg_BUC1} If $\gamma=0$, then $v_2\in {\rm BUC}^{0,1}([0,T]\times E)$ and for every $t\in[0,T]$ the function $x\mapsto\D v_2(t,x)$ belongs to $\mathcal{Z}^1(E;\mathcal{L}(E;\R))$. Moreover there exists $C=C(T)>0$ such that
\begin{align*}
\sup_{t\in[0,T]}\|\D v_2(t,\cdot)\|_{\mathcal{Z}^1(E;\mathcal{L}(E;\R))}\leq C\|g\|_{{\rm BUC}^{0,0}([0,T]\times E)}.
\end{align*}

\item\label{THM_evol_reg_BUC2} If $\gamma\in(0,1)$, then $v_2$ belongs to ${\rm BUC}^{0,1+(1-\gamma)/(1+\gamma)}([0,T]\times E)$ and there exists $C=C(\gamma,T)>0$ such that
\begin{align*}
\|v_2\|_{{\rm BUC}^{0,1+(1-\gamma)/(1+\gamma)}([0,T]\times E)}\leq C\|g\|_{{\rm BUC}^{0,0}([0,T]\times E)}.
\end{align*}

\item\label{THM_evol_reg_BUC3} If $\gamma=1$, then for every $t\in[0,T]$ the function $x\mapsto v_2(t,x)$ belongs to $\mathcal{Z}^1(E;\R)$ and there exists $C=C(T)>0$ such that
\begin{align*}
\sup_{t\in[0,T]}\|v_2(t,\cdot)\|_{\mathcal{Z}^1(E;\R)}\leq C\|g\|_{{\rm BUC}^{0,0}([0,T]\times E)}.
\end{align*}
\end{enumerate}
\end{thm}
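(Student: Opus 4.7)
The plan is to port the proof strategy of Theorem \ref{THM_reg_BUC} to the finite-time convolution $v_2(t,x)=\int_0^t P(s)g(t-s,\cdot)(x)\,ds$. The exponential weight $e^{-\lambda s}$ of the stationary case is replaced by the indicator of $[0,t]$, and the global bound $\|f\|_\infty$ by $\|g\|_{{\rm BUC}^{0,0}([0,T]\times E)}$. All pointwise estimates for $\J^j P(s)\varphi$ provided by \eqref{Goku} apply verbatim to the integrand with $\varphi=g(t-s,\cdot)$, and since the integration range is bounded the only integrability constraint to monitor is the singularity of $(\min\{1,s\})^{-j(1+\gamma)/2}$ at $s=0$, which is integrable exactly when $j(1+\gamma)<2$.

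As a first step, for $\gamma\in[0,1)$ I would establish Fr\'echet differentiability of $x\mapsto v_2(t,x)$ together with
\[
\J v_2(t,x)=\int_0^t \J P(s)g(t-s,\cdot)(x)\,ds,\qquad \sup_{t\in[0,T]}\|\J v_2(t,\cdot)\|_\infty\le C(T,\gamma)\|g\|_{{\rm BUC}^{0,0}([0,T]\times E)},
\]
by the same dominated-convergence argument used in the proof of Theorem \ref{THM_reg_BUC}: the majorant coming from \eqref{Goku} with $j=1$ is $C_1(\min\{1,s\})^{-(1+\gamma)/2}$, which is integrable on $(0,T]$ iff $\gamma<1$. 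Uniform continuity of $\J v_2(t,\cdot)$ at each fixed $t$ follows from a Proposition \ref{Zone400}-type argument. A second differentiation under the integral is unavailable for every $\gamma\in[0,1]$ since $2(1+\gamma)\ge 2$, so higher-order control must instead be obtained through the increment identities \eqref{BA_3}--\eqref{BA_5} applied to $\J P(s)g(t-s,\cdot)$.

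The three H\"older/Zygmund estimates now follow from the standard split-integral procedure, with the split point tailored to each regime. For $\|h\|_E\ge 1$ the required bound follows immediately from the uniform sup-norm estimate on $\J v_2$ (resp.\ on $v_2$), so I focus on $\|h\|_E<1$. In case \eqref{THM_evol_reg_BUC1}, $\gamma=0$, I split at $s_0=\|h\|_E^2$ and bound the near part of the second difference of $\J v_2(t,\cdot)$ term by term via \eqref{Goku} with $j=1$, obtaining $\int_0^{s_0}s^{-1/2}\,ds\cdot\|g\|_\infty\lesssim\|h\|_E\|g\|_\infty$; for the far part I use \eqref{BA_5} combined with \eqref{Goku} for $j=3$ to get $\|h\|_E^2\int_{s_0}^T(\min\{1,s\})^{-3/2}\,ds\cdot\|g\|_\infty\lesssim\|h\|_E\|g\|_\infty$. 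In case \eqref{THM_evol_reg_BUC2}, $\gamma\in(0,1)$, I split at $s_0=\|h\|_E^{2/(1+\gamma)}$ and estimate the first difference of $\J v_2(t,\cdot)$: the near part, via \eqref{Goku} with $j=1$, gives $\int_0^{s_0}s^{-(1+\gamma)/2}\,ds\lesssim\|h\|_E^{(1-\gamma)/(1+\gamma)}$, and the far part, via the mean-value identity \eqref{BA_3} combined with \eqref{Goku} for $j=2$, gives $\|h\|_E\int_{s_0}^T(\min\{1,s\})^{-(1+\gamma)}\,ds\lesssim\|h\|_E^{(1-\gamma)/(1+\gamma)}$. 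In case \eqref{THM_evol_reg_BUC3}, $\gamma=1$, no Fr\'echet derivative of $v_2$ is available and I estimate the second difference of $v_2(t,\cdot)$ directly: splitting at $s_0=\|h\|_E$, the near part is controlled by the contraction $\|P(s)\|_{\mathcal{L}({\rm BUC}(E))}\le 1$ of Proposition \ref{daviderompeilcazzo}, giving $4s_0\|g\|_\infty\lesssim\|h\|_E\|g\|_\infty$, while the far part uses the second-difference identity
\[
P(s)\varphi(x+2h)-2P(s)\varphi(x+h)+P(s)\varphi(x)=\int_0^1\!\int_0^1\J^2P(s)\varphi(x+(\sigma+\tau)h)(h,h)\,d\tau\,d\sigma
\]
combined with \eqref{Goku} for $j=2$ (which produces the factor $(\min\{1,s\})^{-2}$), giving $\|h\|_E^2\int_{s_0}^T(\min\{1,s\})^{-2}\,ds\cdot\|g\|_\infty\lesssim\|h\|_E\|g\|_\infty$.

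The main technical point is that membership of $v_2$ in ${\rm BUC}^{0,\cdot}([0,T]\times E)$, as asserted in cases \eqref{THM_evol_reg_BUC1} and \eqref{THM_evol_reg_BUC2}, requires uniform continuity in $(t,x)$, not merely $x$-regularity uniform in $t$. The spatial regularity at fixed $t$ is given by the estimates above, and its uniformity in $t$ yields the required supremum bound on the ${\rm BUC}^{k+\alpha}(E)$-norm; the remaining uniform continuity in $t$ at fixed $x$ will follow from the decomposition
\[
v_2(t_1,x)-v_2(t_2,x)=\int_{t_2}^{t_1}P(s)g(t_1-s,\cdot)(x)\,ds+\int_0^{t_2}P(s)\bigl(g(t_1-s,\cdot)-g(t_2-s,\cdot)\bigr)(x)\,ds
\]
for $t_1>t_2$: the first summand is controlled by $\|g\|_\infty|t_1-t_2|$ and the second by the uniform continuity of $g(\cdot,y)$ on $[0,T]$ together with $\|P(s)\|_{\mathcal{L}({\rm BUC}(E))}\le 1$. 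The analogous decomposition for $\J v_2$ is the step that I expect to require the most care, since one must combine \eqref{Goku} with $j=1$ with the continuity modulus of $g$ in order to absorb the singular factor $(\min\{1,s\})^{-(1+\gamma)/2}$ at $s=0$.
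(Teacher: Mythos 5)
Your proposal is correct and follows essentially the same route as the paper's proof: the same differentiation under the integral via \eqref{Goku} with $j=1$ for $\gamma\in[0,1)$, the same splits of the $s$-integral at $\|h\|_E^2$, $\|h\|_E^{2/(1+\gamma)}$ and $\|h\|_E$ (the paper takes $\min\{t,\cdot\}$ as split point, a harmless variant), and the same increment identities combined with \eqref{Goku} for $j=2,3$, with the case $\|h\|_E\geq 1$ handled by the sup-norm bounds. The only difference is that you spell out the separate uniform continuity in $t$ (and the case $\gamma=1$), which the paper leaves implicit or defers to earlier arguments.
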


\begin{proof}
Observe that for every $\gamma\in[0,1]$, by Proposition \ref{daviderompeilcazzo} it holds $\sup_{t\in[0,T]}\|v_2(t,\cdot)\|_\infty\leq T\|g\|_{{\rm BUC}^{0,0}([0,T]\times E)}$. If $\gamma\in[0,1)$ then by \eqref{Goku} (with $j=1$) we get that for any $x,h\in E$ and $t\in[0,T]$
\begin{align}
&\left|v_2(t,x+h)-v_2(t,x)-\int_0^t\D P(s)g(t-s,\cdot)(x)hds\right|\notag\\
&=\abs{\int_0^t\int_0^1(\D P(s)g(t-s,\cdot)(x+\sigma h)-\D P(s)g(t-s,\cdot)(x))hd\sigma ds}\notag\\
&\leq 2C_1 \pa{\int_0^t(\min\{1,s\})^{-(1+\gamma)/2}ds}\|h\|_E\norm{g}_{{\rm BUC}^{0,0}([0,T]\times E)}\notag\\
&=2C_1 \pa{\frac{2}{1-\gamma}+(T-1)\chi_{(1,+\infty)}(T)}\|h\|_E\norm{g}_{{\rm BUC}^{0,0}([0,T]\times E)}.\label{saveing1}
\end{align}
So by \eqref{saveing1}, for any $t\in[0,T]$, the function $x\mapsto v_2(t,x)$ is Fr\'echet differentiable and by Proposition \ref{Zone400} (with $\mathcal{M}=E$, $d_{\mathcal{M}}=\norm{\cdot}_E$, $Y=(0,t)$, $\mu$ is the measure $dt$ and $Z$ is $\mathcal{L}(E;\R)$) belongs to ${\rm BUC}^1(E)$. Using the same arguments as in \eqref{saveing1} we get
\begin{align*}
\sup_{t\in [0,T]}\|\D v_2(t,\cdot)\|_{\mathcal{L}(E;\R)}\leq C_1 \pa{\frac{2}{1-\gamma}+(T-1)\chi_{(1,+\infty)}(T)}\|h\|_E\norm{g}_{{\rm BUC}^{0,0}([0,T]\times E)}.
\end{align*}
So $v_2$ belongs to ${\rm BUC}^{0,1}([0,T]\times E)$.

Now we prove \eqref{THM_evol_reg_BUC1}. For any $t\in[0,T]$ and $x,h\in E$ with $\|h\|_E<1$ we consider
\begin{align*}
a_!(t,x):=\int_0^{\min\{t,\|h\|_E^2\}}\D P(s)g(t-s,\cdot)(x)ds,\qquad b_1(t,x):=\int_{\min\{t,\|h\|_E^2\}}^t \D P(s)g(t-s,\cdot)(x)ds.
\end{align*}
Now observe that by \eqref{Goku} (with $j=1$) we get
\begin{align}
\|a_1(t,x+2h)-2a_1(t,x+h)+a_1(t,x)\|_{\mathcal{L}(E;\R)} &\leq 4C_1 \pa{\int_0^{\min\{t,\|h\|_E^2\}}s^{-1/2}ds}\|g\|_{{\rm BUC}^{0,0}([0,T]\times E)}\notag\\
& \leq 8C_1\|h\|_E\|g\|_{{\rm BUC}^{0,0}([0,T]\times E)}.\label{HULK1}
\end{align}
Observe that $b_1(t,x+2h)-2b_1(t,x+h)+b_1(t,x)=0$ if $\|h\|_E^2\geq t$. So if $\|h\|_E^2<t$, by \eqref{Goku} (with $j=3$), we get 
\begin{align}
\|b_1(t,x+2h)&-2b_1(t,x+h)+b_1(t,x)\|_{\mathcal{L}(E;\R)}\notag\\ 
&=\norm{\int_{\min\{t,\|h\|_E^2\}}^t\int_0^1\int_0^1\D^3P(s)g(t-s,\cdot)(x+(\tau+\sigma)h)(h,h,\cdot)d\tau d\sigma ds}_{\mathcal{L}(E;\R)}\notag\\
&\leq C_3\|h\|_E^2\pa{\int_{\|h\|_E^2}^1s^{-3/2}ds+\chi_{(1,+\infty)}(T)\int_1^Tds}\|g\|_{{\rm BUC}^{0,0}([0,T]\times E)}\notag\\
&\leq C_3\|h\|_E^2\pa{\int_{\|h\|_E^2}^{+\infty}s^{-3/2}ds+\chi_{(1,+\infty)}(T)\int_1^T\frac{1}{\|h\|_E}ds}\|g\|_{{\rm BUC}^{0,0}([0,T]\times E)}\notag\\
&= C_3\pa{2+(T-1)\chi_{(1,+\infty)}(T)}\|h\|_E\|g\|_{{\rm BUC}^{0,0}([0,T]\times E)}.\label{HULK2}
\end{align}
Combining \eqref{HULK1} and \eqref{HULK2} we get \eqref{THM_evol_reg_BUC1}, indeed the case $\|h\|_E\geq 1$ can be obtained using arguments similar to those used in the proof of \eqref{caso>1}.

It is now time to prove \eqref{THM_evol_reg_BUC2}. For any $t\in[0,T]$ and $x,h\in E$ with $\|h\|_E<1$ we consider
\begin{align*}
a_{1,\gamma}(t,x)&:=\int_0^{\min\{t,\|h\|_E^{2/(1+\gamma)}\}}\D P(s)g(t-s,\cdot)(x)ds,\\
b_{1,\gamma}(t,x)&:=\int_{\min\{t,\|h\|_E^{2/(1+\gamma)}\}}^t \D P(s)g(t-s,\cdot)(x)ds.
\end{align*}
Now observe that by \eqref{Goku} (with $j=1$) we get
\begin{align}
\|a_{1,\gamma}(t,x+h)-a_{1,\gamma}(t,x)\|_{\mathcal{L}(E;\R)} &\leq 2C_1 \pa{\int_0^{\min\{t,\|h\|_E^{2/(1+\gamma)}\}}s^{-(1+\gamma)/2}ds}\|g\|_{{\rm BUC}^{0,0}([0,T]\times E)}\notag\\
& \leq \frac{4 C_1}{1-\gamma}\|h\|_E^{(1-\gamma)/(1+\gamma)}\|g\|_{{\rm BUC}^{0,0}([0,T]\times E)}.\label{HULK3}
\end{align}
Observe that $b_{1,\gamma}(t,x+h)-b_{1,\gamma}(t,x)=0$ if $\|h\|_E^{2/(1+\gamma)}\geq t$. So if $\|h\|_E^{2/(1+\gamma)}<t$, by \eqref{Goku} (with $j=2$), we get 
\begin{align}
&\|b_{1,\gamma}(t,x+h)-b_{1,\gamma}(t,x)\|_{\mathcal{L}(E;\R)}\notag\\
&=\norm{\int_{\min\{t,\|h\|_E^{2/(1+\gamma)}\}}^t\int_0^1\D^2P(s)g(t-s,\cdot)(x+\sigma h)(h,\cdot) d\sigma ds}_{\mathcal{L}(E;\R)}\notag\\
&\leq C_2\|h\|_E\pa{\int_{\|h\|_E^{2/(1+\gamma)}}^1s^{-(1+\gamma)}ds+\chi_{(1,+\infty)}(T)\int_1^Tds}\|g\|_{{\rm BUC}^{0,0}([0,T]\times E)}\notag\\
&\leq C_2\|h\|_E\pa{\int_{\|h\|_E^{2/(1+\gamma)}}^{+\infty}s^{-(1+\gamma)}ds+\chi_{(1,+\infty)}(T)\int_1^T\frac{1}{\|h\|_E^{2\gamma/(1+\gamma)}}ds}\|g\|_{{\rm BUC}^{0,0}([0,T]\times E)}\notag\\
&= C_2\pa{\frac{1}{\gamma}+(T-1)\chi_{(1,+\infty)}(T)}\|h\|_E^{(1-\gamma)/(1+\gamma)}\|g\|_{{\rm BUC}^{0,0}([0,T]\times E)}.\label{HULK4}
\end{align}
Combining \eqref{HULK3} and \eqref{HULK4} we get \eqref{THM_evol_reg_BUC2}, indeed the case $\|h\|_E\geq 1$ can be obtained using arguments similar to those used in the proof of \eqref{caso>1,2}. The proof of \eqref{THM_evol_reg_BUC3} follows from the same arguments as the proof of \eqref{THM_evol_reg_BUC1}.
%
\end{proof}

We can now state the following result which is in the same spirit of the results of \cite{KCL75,KCL80}, with the twist presented in this paper.

\begin{thm}\label{Thm_evol_Schauder_Rd}
Assume Hypotheses \ref{RDS1} hold true. Let $T>0$, $\alpha\in(0,1)$, $g\in {\rm BUC}^{0,\alpha}([0,T]\times E)$ and consider the function
\begin{align*}
v_2(t,x):=\int_0^tP(s)g(t-s,\cdot)(x)ds,\qquad t\in[0,T],\ x\in E.
\end{align*}
It holds
\begin{enumerate}[\rm (i)]
\item\label{Thm_evol_Schauder_Rd1} If $\gamma\in (0,1]$ and $0<\alpha<\frac{2\gamma}{1+\gamma}$ then $v_2$ belongs to ${\rm BUC}^{0,1+\alpha+(1-\gamma)/(1+\gamma)}([0,T]\times E)$ and there exists $C=C(T,\gamma,\alpha)>0$ such that
\begin{align*}
\|v_2\|_{{\rm BUC}^{0,1+\alpha+(1-\gamma)/(1+\gamma)}([0,T]\times E)}\leq C\|g\|_{{\rm BUC}^{0,\alpha}([0,T]\times E)}.
\end{align*}

\item\label{Thm_evol_Schauder_Rd2} If $\gamma\in(0,1)$ and $\alpha=\frac{2\gamma}{1+\gamma}$, then $v_2$ belongs to ${\rm BUC}^{0,1}([0,T]\times E)$ and, for every $t\in[0,T]$, the map $x\mapsto\D v_2(t,x)$ belongs to $\mathcal{Z}^1(E;\mathcal{L}(E;\R))$ and there exists $C=C(T,\gamma)>0$ such that
\begin{align*}
\sup_{t\in[0,T]}\|\D v_2(t,\cdot)\|_{\mathcal{Z}^1(E;\mathcal{L}(E;\R))}\leq C\|g\|_{{\rm BUC}^{0,2\gamma/(1+\gamma)}([0,T]\times E)}.
\end{align*}

\item\label{Thm_evol_Schauder_Rd3} If $\gamma\in[0,1)$ and $\alpha>\frac{2\gamma}{1+\gamma}$ then $v_2$ belongs to ${\rm BUC}^{0,2+\alpha-2\gamma/(1+\gamma)}([0,T]\times E)$ and there exists $C=C(T,\gamma,\alpha)>0$ such that
\begin{align*}
\|v_2\|_{{\rm BUC}^{0,2+\alpha-2\gamma/(1+\gamma)}([0,T]\times E)}\leq C\|g\|_{{\rm BUC}^{0,\alpha}([0,T]\times E)}.
\end{align*}
\end{enumerate}
\end{thm}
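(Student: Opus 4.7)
My plan is to adapt the blueprint of Theorem \ref{Thm_Schauder_Rd} (stationary, $C^\alpha$ data) by working with the representation
\[
\D^j v_2(t,x)=\int_0^t\D^jP(s)g(t-s,\cdot)(x)\,ds,\qquad j=1,2,
\]
together with the gradient estimates \eqref{Goku_alpha}. The only structural novelty with respect to the stationary proof is that the exponential decay $e^{-\lambda t}$ at infinity is replaced by the finite cutoff $t\leq T$: when $T>1$, the region $s\in[1,T]$ contributes only constants because $\min\{1,s\}=1$ on it, so all the singular analysis happens near $s=0$. I would first establish $v_2(t,\cdot)\in{\rm BUC}^1(E)$ in all three cases via dominated convergence and Proposition \ref{Zone400}, using that $(1-\alpha)(1+\gamma)/2<1$ always holds for $\alpha\in(0,1)$ and $\gamma\in[0,1]$. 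Under the hypothesis $\alpha>\frac{2\gamma}{1+\gamma}$ of case \eqref{Thm_evol_Schauder_Rd3}, $(2-\alpha)(1+\gamma)/2<1$ also holds, and a second differentiation yields $v_2(t,\cdot)\in{\rm BUC}^2(E)$. The required sup-norm bounds on $\D v_2$ and $\D^2 v_2$ follow by direct integration of \eqref{Goku_alpha}, and in every case the regime $\|h\|_E\geq 1$ is disposed of by the same trick used in the proof of Theorem \ref{Thm_Schauder_Rd}\eqref{Thm_Schauder_Rd1}.

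\textbf{Cases \eqref{Thm_evol_Schauder_Rd1} and \eqref{Thm_evol_Schauder_Rd3}.} For $\|h\|_E<1$ I would split $\D v_2(t,x+h)-\D v_2(t,x)$ (respectively $\D^2v_2(t,x+h)-\D^2 v_2(t,x)$) at the threshold $\min\{t,\|h\|_E^{2/(1+\gamma)}\}$, exactly as in the stationary proof. On the inner piece, the triangle inequality combined with \eqref{Goku_alpha} for $j=1$ (respectively $j=2$) yields, after integration, the target exponent $\alpha+(1-\gamma)/(1+\gamma)$ (respectively $\alpha-2\gamma/(1+\gamma)$). On the outer piece, I would insert the identity $\D P(s)g(x+h)-\D P(s)g(x)=\int_0^1\D^2P(s)g(x+\sigma h)(h,\cdot)\,d\sigma$ (or its $\D^2\to\D^3$ analogue) and apply \eqref{Goku_alpha} with one extra derivative. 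The outer integrals $\int_{\|h\|^{2/(1+\gamma)}}^{\min\{t,1\}}s^{-(2-\alpha)(1+\gamma)/2}\,ds$ and $\int_{\|h\|^{2/(1+\gamma)}}^{\min\{t,1\}}s^{-(3-\alpha)(1+\gamma)/2}\,ds$ converge precisely because of the sign of $\alpha-\frac{2\gamma}{1+\gamma}$ in the respective case, and both evaluate (up to constants) to the claimed exponent in $\|h\|_E$. The contribution from $s\in[1,T]$, when $T>1$, is bounded by a constant multiple of $\|h\|_E$, which is absorbed in the target Hölder norm since the target exponent does not exceed one.

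\textbf{Case \eqref{Thm_evol_Schauder_Rd2}: the main obstacle.} At the critical value $\alpha=\frac{2\gamma}{1+\gamma}$ the two outer integrals above diverge logarithmically, so the Hölder proof breaks down. In analogy with Theorem \ref{THM_reg_BUC}\eqref{THM_reg_BUC1}, my plan is instead to bound the second difference $\D v_2(t,x+2h)-2\D v_2(t,x+h)+\D v_2(t,x)$ linearly in $\|h\|_E$, splitting once more at $\|h\|_E^{2/(1+\gamma)}$. On the inner piece, the triangle inequality together with \eqref{Goku_alpha} for $j=1$ gives an integral $\int_0^{\|h\|^{2/(1+\gamma)}}s^{-(1-\gamma)/2}\,ds$ whose value is, up to a constant, exactly $\|h\|_E$. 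On the outer piece, use the third-order identity
\[
\D P(s)g(x+2h)-2\D P(s)g(x+h)+\D P(s)g(x)=\int_0^1\!\!\int_0^1\D^3P(s)g(x+(\tau+\sigma)h)(h,h,\cdot)\,d\tau\,d\sigma
\]
together with \eqref{Goku_alpha} for $j=3$: the $\|h\|_E^2$ prefactor from this identity exactly compensates the $\|h\|_E^{-1}$ produced by $\int_{\|h\|^{2/(1+\gamma)}}^{\min\{t,1\}}s^{-(3+\gamma)/2}\,ds$, so the outer piece is also linear in $\|h\|_E$. The regions $s\in[1,T]$ (if $T>1$) and $\|h\|_E\geq 1$ are routine. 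Arranging this precise power-counting cancellation at the borderline exponent is the delicate part of the theorem, and it relies crucially on the availability of \eqref{Goku_alpha} all the way up to $j=3$.
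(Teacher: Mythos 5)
Your proposal is correct and follows essentially the same route as the paper: differentiation under the integral via \eqref{Goku_alpha} and Proposition \ref{Zone400}, the split of $\D^j v_2$ at the threshold $\min\{t,\|h\|_E^{2/(1+\gamma)}\}$ with the inner piece handled by the triangle inequality and the outer piece by a mean-value (or second-difference) identity with one (or two) extra derivatives, and the borderline case treated through the Zygmund second difference exactly as in Theorem \ref{THM_reg_BUC}\eqref{THM_reg_BUC1}. The only differences are presentational: the paper writes out case \eqref{Thm_evol_Schauder_Rd1} in full and refers the other two cases to the analogous computations of Theorem \ref{THM_evol_reg_BUC}, and it absorbs the $s\in[1,T]$ contribution by inserting a factor $\|h\|_E^{-(2\gamma/(1+\gamma)-\alpha)}$, which is the same power-counting observation you make by noting the target exponent does not exceed one.
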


\begin{proof}
Observe that for every $\gamma\in[0,1]$, by Proposition \ref{daviderompeilcazzo} it holds $\sup_{t\in[0,T]}\|v_2(t,\cdot)\|_\infty\leq T\|g\|_{{\rm BUC}^{0,0}([0,T]\times E)}$. If $\gamma\in[0,1]$ then by \eqref{Goku_alpha} (with $j=1$) we get that for any $x,h\in E$ and $t\in[0,T]$
\begin{align}
&\left|v_2(t,x+h)-v_2(t,x)-\int_0^t\D P(s)g(t-s,\cdot)(x)hds\right|\notag\\
&=\abs{\int_0^t\int_0^1(\D P(s)g(t-s,\cdot)(x+\sigma h)-\D P(s)g(t-s,\cdot)(x))hd\sigma ds}\notag\\
&\leq 2c_\alpha \pa{\int_0^t(\min\{1,s\})^{-(1-\alpha)(1+\gamma)/2}ds}\|h\|_E\norm{g}_{{\rm BUC}^{0,\alpha}([0,T]\times E)}\notag\\
&=2c_\alpha \pa{\frac{2}{2-(1-\alpha)(1+\gamma)}+(T-1)\chi_{(1,+\infty)}(T)}\|h\|_E\norm{g}_{{\rm BUC}^{0,\alpha}([0,T]\times E)}.\label{saveing2}
\end{align}
So by \eqref{saveing2}, for any $t\in[0,T]$, the function $x\mapsto v_2(t,x)$ is Fr\'echet differentiable and by Proposition \ref{Zone400} (with $\mathcal{M}=E$, $d_{\mathcal{M}}=\norm{\cdot}_E$, $Y=(0,t)$, $\mu$ is the measure $dt$ and $Z$ is $\mathcal{L}(E;\R)$) it belongs to ${\rm BUC}^1(E)$. Using the same arguments as in \eqref{saveing2} we get
\begin{align*}
\sup_{t\in [0,T]}\|\D v_2(t,\cdot)\|_{\mathcal{L}(E;\R)}\leq c_\alpha \pa{\frac{2}{2-(1-\alpha)(1+\gamma)}+(T-1)\chi_{(1,+\infty)}(T)}\norm{g}_{{\rm BUC}^{0,\alpha}([0,T]\times E)}.
\end{align*}
So $v_2$ belongs to ${\rm BUC}^{0,1}([0,T]\times E)$.

Now if $\gamma\in[0,1)$ and $\alpha>\frac{2\gamma}{1+\gamma}$ then by \eqref{Goku_alpha} (with $j=2$) we get that for any $x,h\in E$ and $t\in[0,T]$
\begin{align}
&\left\|\D v_2(t,x+h)-\D v_2(t,x)-\int_0^t(\D^2 P(s)g(t-s,\cdot)(x))(h,\cdot)ds\right\|_{\mathcal{L}(E;\R)}\notag\\
&=\norm{\int_0^t\int_0^1(\D^2 P(s)g(t-s,\cdot)(x+\sigma h)-\D^2 P(s)g(t-s,\cdot)(x))(h,\cdot)d\sigma ds}_{\mathcal{L}(E;\R)}\notag\\
&\leq 2c_\alpha \pa{\int_0^t(\min\{1,s\})^{-(2-\alpha)(1+\gamma)/2}ds}\|h\|_E\norm{g}_{{\rm BUC}^{0,\alpha}([0,T]\times E)}\notag\\
&=2c_\alpha \pa{\frac{2}{2-(2-\alpha)(1+\gamma)}+(T-1)\chi_{(1,+\infty)}(T)}\|h\|_E\norm{g}_{{\rm BUC}^{0,\alpha}([0,T]\times E)}.\label{saveing3}
\end{align}
So by \eqref{saveing3}, for any $t\in[0,T]$, the function $x\mapsto v_2(t,x)$ is two times Fr\'echet differentiable and by Proposition \ref{Zone400} (with $\mathcal{M}=E$, $d_{\mathcal{M}}=\norm{\cdot}_E$, $Y=(0,t)$, $\mu$ is the measure $dt$ and $Z$ is $\mathcal{L}^{(2)}(E;\R)$) it belongs to ${\rm BUC}^2(E)$. Using the same arguments as in \eqref{saveing3} we get
\begin{align*}
\sup_{t\in [0,T]}\|\D^2 v_2(t,\cdot)\|_{\mathcal{L}^{(2)}(E;\R)}\leq c_\alpha \pa{\frac{2}{2-(2-\alpha)(1+\gamma)}+(T-1)\chi_{(1,+\infty)}(T)}\norm{g}_{{\rm BUC}^{0,\alpha}([0,T]\times E)}.
\end{align*}
So $v_2$ belongs to ${\rm BUC}^{0,2}([0,T]\times E)$.

We only prove \eqref{Thm_evol_Schauder_Rd1}, since the proofs of \eqref{Thm_evol_Schauder_Rd2} and \eqref{Thm_evol_Schauder_Rd3} are similar to the one contained in Theorem \ref{THM_evol_reg_BUC}. For any $t\in[0,T]$ and $x,h\in E$ with $\|h\|_E<1$ we consider
\begin{align*}
a_{1,\gamma}(t,x)&:=\int_0^{\min\{t,\|h\|_E^{2/(1+\gamma)}\}}\D P(s)g(t-s,\cdot)(x)ds,\\
b_{1,\gamma}(t,x)&:=\int_{\min\{t,\|h\|_E^{2/(1+\gamma)}\}}^t \D P(s)g(t-s,\cdot)(x)ds.
\end{align*}
Now observe that by \eqref{Goku_alpha} (with $j=1$) we get
\begin{align}
\|a_{1,\gamma}(t,x+h)-a_{1,\gamma}(t,x)\|_{\mathcal{L}(E;\R)} &\leq 2c_\alpha \pa{\int_0^{\min\{t,\|h\|_E^{2/(1+\gamma)}\}}s^{-(1-\alpha)(1+\gamma)/2}ds}\|g\|_{{\rm BUC}^{0,\alpha}([0,T]\times E)}\notag\\
& \leq \frac{4 c_\alpha}{2-(1-\alpha)(1+\gamma)}\|h\|_E^{\alpha+(1-\gamma)/(1+\gamma)}\|g\|_{{\rm BUC}^{0,\alpha}([0,T]\times E)}.\label{HULK_HOGAN1}
\end{align}
Observe that $b_{1,\gamma}(t,x+h)-b_{1,\gamma}(t,x)=0$ if $\|h\|_E^{2/(1+\gamma)}\geq t$. So if $\|h\|_E^{2/(1+\gamma)}<t$, by \eqref{Goku_alpha} (with $j=2$), we get 
\begin{align}
&\|b_{1,\gamma}(t,x+h)-b_{1,\gamma}(t,x)\|_{\mathcal{L}(E;\R)}\notag\\
&=\norm{\int_{\min\{t,\|h\|_E^{2/(1+\gamma)}\}}^t\int_0^1\D^2P(s)g(t-s,\cdot)(x+\sigma h)(h,\cdot) d\sigma ds}_{\mathcal{L}(E;\R)}\notag\\
&\leq c_\alpha\|h\|_E\pa{\int_{\|h\|_E^{2/(1+\gamma)}}^1s^{-(2-\alpha)(1+\gamma)/2}ds+\chi_{(1,+\infty)}(T)\int_1^Tds}\|g\|_{{\rm BUC}^{0,\alpha}([0,T]\times E)}\notag\\
&\leq c_\alpha\|h\|_E\pa{\int_{\|h\|_E^{2/(1+\gamma)}}^{+\infty}s^{-(2-\alpha)(1+\gamma)/2}ds+\chi_{(1,+\infty)}(T)\int_1^T\frac{1}{\|h\|_E^{2\gamma/(1+\gamma)-\alpha}}ds}\|g\|_{{\rm BUC}^{0,\alpha}([0,T]\times E)}\notag\\
&= c_\alpha\pa{\frac{2}{(2-\alpha)(1+\gamma)-2}+(T-1)\chi_{(1,+\infty)}(T)}\|h\|_E^{\alpha+(1-\gamma)/(1+\gamma)}\|g\|_{{\rm BUC}^{0,\alpha}([0,T]\times E)}.\label{HULK_HOGAN2}
\end{align}
Combining \eqref{HULK_HOGAN1} and \eqref{HULK_HOGAN2} we get \eqref{Thm_evol_Schauder_Rd1}, indeed the case $\|h\|_E\geq 1$ can be obtained using arguments similar to those used in the proof of \eqref{caso>1,2}.
\end{proof}

\section{Remarks and Examples}\label{RMKS}
In this section we present some relevant examples of $A$, $b$, $\OO$ and $\gamma\in [0,1]$  verifying Hypotheses \ref{RDS1}. The following is a simple example of function $b$ that verify Hypothesis \ref{RDS1}\eqref{bbbb}
\[
b(\xi,x):=-C_{2m+1}(\xi)x^{2m+1}+\sum^{2m}_{k=0}C_k(\xi)x^k, \qquad \xi\in \CO,\ x\in\R;
\]
where $C_0,\ldots,C_{2m}$ are continuous functions from $\CO$ to $\R$ and $C_{2m+1}:\CO\ra\R$ is continuous and strictly positive function. 
If $d=1$ and $\mathcal{O}$ is any compact interval, then Hypothesis \ref{RDS1}\eqref{gamma} is verified for any $\gamma\in[0,1]$. While if $d=2,3$ then Hypothesis \ref{RDS1}\eqref{gamma} may not be verified for some $\gamma\in [0,1]$. However, if $\OO$ is ``regular'' enough, then there exists $\gamma_\OO>0$ such that Hypothesis \ref{RDS1}\eqref{gamma} holds true for any $\gamma>\gamma_\OO$. The constant $\gamma_\OO$ depends on the dimension $d$ and on the boundary of $\OO$. We refer to \cite[Section 4.2]{FU-OR1} for a proof of the following proposition.
\begin{prop}\label{Carlo}
Let $d=1,2,3$. If $A$ is a realization of the Laplacian operator with Dirichlet boundary condition, then Hypothesis \ref{RDS1}\eqref{gamma} holds true in the following cases
\begin{align*}
\gamma &> \frac{2d-3}{2},\qquad\mathcal{O}:=\{x\in \R^d\, |\, \norm{x}_{\R^d}\leq 1 \};\\
\gamma &> \frac{d-2}{2},\qquad\mathcal{O}:=[0,\pi]^d.
\end{align*}
\end{prop}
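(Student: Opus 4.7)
\medskip

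\noindent\textit{Proof proposal.} The statement amounts to showing that the stochastic convolution $W_A(t)=\int_0^t e^{(t-s)A}(-A)^{-\gamma/2}dW(s)$ admits, for each $p\ge1$ and $T>0$, a $\mathbb{P}$-a.s.\ continuous $E$-valued modification with $\mathbb{E}\|W_A\|_{C([0,T];E)}^p<+\infty$. My plan is to expand $W_A$ in the Dirichlet eigenbasis of $-A$ and reduce to a purely spectral summability condition, then verify this condition geometry by geometry using the explicit structure of the eigenfunctions.

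\textbf{Step 1: spectral form and factorization.} Let $\{\phi_k\}$ be the Dirichlet eigenfunctions of $-A$ with eigenvalues $\{\lambda_k\}$ and $\{\beta_k\}$ the i.i.d.\ real Brownian motions associated to the cylindrical noise (so $e_k=\phi_k$). Then
\[
W_A(t)=\sum_k \lambda_k^{-\gamma/2}\Bigl(\int_0^t e^{-(t-s)\lambda_k}\,d\beta_k(s)\Bigr)\phi_k.
\]
To turn moment estimates into a continuous $E$-valued modification I would use the Da Prato--Kwapie\'n--Zabczyk factorization: for $\alpha\in(1/p,1)$ write $W_A(t)=\frac{\sin(\pi\alpha)}{\pi}\int_0^t(t-s)^{\alpha-1}e^{(t-s)A}Y_\alpha(s)\,ds$ with $Y_\alpha(s)=\int_0^s(s-r)^{-\alpha}e^{(s-r)A}(-A)^{-\gamma/2}dW(r)$. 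The smoothing operator $f\mapsto\int_0^\cdot(\cdot-s)^{\alpha-1}e^{(\cdot-s)A}f(s)ds$ maps $L^p((0,T);E)$ continuously into $C([0,T];E)$, so the problem is reduced to checking $\mathbb{E}\|Y_\alpha\|_{L^p((0,T);E)}^p<\infty$.

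\textbf{Step 2: reduction to a spectral sum.} Since $Y_\alpha(s)$ is a centered Gaussian in $C(\overline{\mathcal{O}})$, by Gaussian hypercontractivity it suffices to control the pointwise variance and a Dudley-type metric entropy term through the covariance kernel. A direct computation yields, for every $\xi\in\overline{\mathcal{O}}$,
\[
\mathrm{Var}(W_A(t)(\xi))\;=\;\sum_k |\phi_k(\xi)|^2\,\lambda_k^{-\gamma}\,\frac{1-e^{-2t\lambda_k}}{2\lambda_k}\;\le\;\frac{1}{2}\sum_k |\phi_k(\xi)|^2\,\lambda_k^{-(1+\gamma)},
\]
with an analogous formula for increments $W_A(t)(\xi)-W_A(t)(\eta)$. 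Combining this with Kolmogorov's continuity theorem (applied to $\xi\mapsto W_A(t)(\xi)$ for fixed $t$ and to $t\mapsto W_A(t)$ for fixed $\xi$ via the factorization), the whole problem collapses to showing that
\[
\sup_{\xi\in\overline{\mathcal{O}}}\sum_k |\phi_k(\xi)|^2\,\lambda_k^{-(1+\gamma)}<+\infty
\]
together with a uniform Dini-type modulus obtained by incorporating the extra $\lambda_k$-weight coming from $Y_\alpha$.

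\textbf{Step 3: the two geometries.} On $\mathcal{O}=[0,\pi]^d$ the eigenfunctions are the products $\phi_n(\xi)=(2/\pi)^{d/2}\prod_{i=1}^d\sin(n_i\xi_i)$ with $\lambda_n=|n|^2$, $n\in\mathbb{N}^d$, and they are uniformly bounded in $L^\infty$. The spectral sum therefore reduces to $\sum_{n\in\mathbb{N}^d}|n|^{-2(1+\gamma)}$, which is finite precisely when $2(1+\gamma)>d$, i.e.\ $\gamma>(d-2)/2$. For the unit ball I would decompose $\phi_{n,k,m}(r,\theta)=R_{n,k}(r)Y_n^m(\theta)$, where $Y_n^m$ are spherical harmonics of degree $n$ and $R_{n,k}$ are normalized radial parts built from $J_{n+(d-2)/2}$, with $\lambda_{n,k}=j_{n+(d-2)/2,k}^2\asymp(n+k)^2$. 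Using the spherical-harmonic addition formula $\sum_m|Y_n^m(\theta)|^2=c_n\asymp n^{d-2}$ and sharp uniform bounds on the Bessel factors $R_{n,k}$, the spectral sum takes the form $\sum_{n,k} n^{d-2}\|R_{n,k}\|_\infty^2(n+k)^{-2(1+\gamma)}$, which converges precisely at the threshold $\gamma>(2d-3)/2$.

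\textbf{Main obstacle.} The cube case is the soft one: tensor-product structure makes the eigenfunctions uniformly bounded, so everything boils down to the trace-class condition $\sum\lambda_k^{-(1+\gamma)}<\infty$ dictated by Weyl's asymptotics. The real technical work is the ball. The gap between $(d-2)/2$ and $(2d-3)/2$ encodes two independent $L^\infty$-defects of the spherical eigenbasis -- the concentration of individual spherical harmonics (zonal peaking) and the near-boundary behaviour of the Bessel radial modes -- and the plan stands or falls on producing sharp, index-uniform $L^\infty$ estimates for these eigenfunctions (essentially Sogge-type spectral cluster bounds adapted to Dirichlet conditions on a ball). This is exactly the analytic content worked out in \cite[Section 4.2]{FU-OR1}, which is why I would ultimately invoke those estimates rather than re-derive them.
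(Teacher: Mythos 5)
The paper does not actually prove Proposition \ref{Carlo}: it only refers to \cite[Section 4.2]{FU-OR1}, so there is no in-paper argument to compare against beyond that citation. Your outline follows the same route as the cited reference (factorization of the stochastic convolution, reduction to the spectral condition $\sup_{\xi}\sum_k|\phi_k(\xi)|^2\lambda_k^{-(1+\gamma)}<\infty$ plus a modulus for the $Y_\alpha$-term, then the explicit sine eigenbasis for the cube and the Bessel/spherical-harmonic decomposition for the ball) and identifies both thresholds correctly; since for the decisive index-uniform sup-norm estimates on the ball you, exactly like the paper, fall back on \cite{FU-OR1}, your proposal is consistent with the paper's treatment and, in the cube case, even more explicit than what the paper provides.
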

In view of Proposition \ref{Carlo}, we can deduce some simple examples where the main results of this papers can be applied. However by Proposition \ref{Carlo} the results of this paper cannot be applied to the case $\mathcal{O}=\{x\in \R^3\; \norm{x}_{\R^3}\leq 1\}$. Indeed, in this case, Hypothesis \ref{RDS1}\eqref{gamma} is verified if $\gamma>3/2$, however we do not know of a technique to obtain estimates similar to \eqref{Goku} and \eqref{Gokulip} in the case $\gamma>1$.

\begin{rmk}
If $A:\Dom(A)\subseteq C(\ol{\mathcal{O}})\ra C(\ol{\mathcal{O}})$ is the realization of the Laplacian operator with Neumann boundary conditions then $E=C(\ol{\mathcal{O}})$.
\end{rmk}

Even in the linear case ($F\equiv0$ in \eqref{eqF02}) it does not seem reasonable to obtain a maximal Schauder regularity result if $\gamma>0$ in \eqref{eqF02} (see, for example, \cite{LR21}). Here with maximal Schauder regularity result we mean that if $f\in C_b^\alpha(E)$, then $u\in C^{2+\alpha}_b(E)$. In some recent papers, in order to recover the missing regularity and obtain results similar to the finite dimensional one, the authors work along ``privileged directions'' (see \cite{ABF21,BF20,BF22,BF_Schauder,CL19,LR21,MAS1}). However, to prove a result similar to that in \cite{BF_Schauder,CL19,LR21} for \eqref{eqF02} we shall need that the function $b$ verifies a dissipativity assumption similar to Hypothesis \ref{RDS1}\eqref{bbbb} but along the directions of $(-A)^{-\gamma/2}(E)$, which, right now, we are not able to prove.

\appendix

\section{A result about uniformly continuous functions}

We recall a result reguarding uniformly continuous functions that we have used throughout the paper. The proof is standard and follows the same ideas of the proof of \cite[Lemma 3.3]{Pri99}, we provide it for completeness (see also \cite{BF_Schauder}).

\begin{prop}\label{Zone400}
Let $\mathcal{M}$ be a separable metric space with metric $d_{\mathcal{M}}$, $(Y,\mu)$ be a measurable space ($\mu$ is a finite, positive and complete measure) and $Z$ be a Banach space with norm $\norm{\cdot}_Z$. Consider a function $F:\mathcal{M}\times Y\ra Z$ that satisfies
\begin{enumerate}[\rm (i)]
\item for any $m\in\mathcal{M}$, the map $y\mapsto F(m,y)$ is measurable;

\item for $\mu$-a.e. $y\in Y$, the map $m\mapsto F(m,y)$ is uniformly continuous;\label{Zone400,2}

\item there exists a $\mu$-integrable function $g:Y\ra\R$ such that for all $m\in\mathcal{M}$ and $\mu$-a.e. $y\in Y$ it holds $\|F(m,y)\|_Z\leq g(y)$.
\end{enumerate}
The map $h:\mathcal{M}\ra Z$, defined as
\begin{align*}
h(m):=\int_Y F(m,y) \mu(dy),\qquad m\in\mathcal{M},
\end{align*}
is bounded and uniformly continuous.
\end{prop}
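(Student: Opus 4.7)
\textbf{Proof proposal for Proposition \ref{Zone400}.}

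The boundedness of $h$ is immediate: by the domination hypothesis and Bochner's inequality,
$\|h(m)\|_Z \leq \int_Y \|F(m,y)\|_Z\mu(dy) \leq \int_Y g(y)\mu(dy) < +\infty$
for every $m\in\mathcal{M}$, where the finiteness follows from $g\in L^1(Y,\mu)$.

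For uniform continuity, the plan is to use the separability of $\mathcal{M}$ to upgrade the \emph{pointwise-in-$y$} uniform continuity of $m\mapsto F(m,y)$ into an integrable control. Fix $\varepsilon>0$, set $M:=1+\mu(Y)+\int_Y g\,d\mu$, and let $\{m_k\}_{k\in\N}$ be a countable dense subset of $\mathcal{M}$. For each $n\in\N$ define
\begin{align*}
B_n := \set{y\in Y \tc \sup_{\substack{j,k\in\N\\ d_{\mathcal{M}}(m_j,m_k)<1/n}} \|F(m_j,y)-F(m_k,y)\|_Z \leq \varepsilon/M }.
\end{align*}
Each $B_n$ is measurable (it is a countable intersection of measurable sets by hypothesis (i)), and for any $y$ for which $m\mapsto F(m,y)$ is uniformly continuous one has $y\in B_n$ for all $n$ sufficiently large. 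Therefore $\mu(Y\setminus B_n)\to 0$ as $n\to+\infty$, and by dominated convergence (with majorant $g$) we can select $n_0$ so that $\int_{Y\setminus B_{n_0}}g(y)\mu(dy)<\varepsilon/M$.

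Now set $\delta:=1/(2n_0)$ and take $m_1,m_2\in\mathcal{M}$ with $d_{\mathcal{M}}(m_1,m_2)<\delta$. By density, one can choose $m_j,m_k$ in the countable set with $d_{\mathcal{M}}(m_1,m_j)$ and $d_{\mathcal{M}}(m_2,m_k)$ arbitrarily small (so in particular $d_{\mathcal{M}}(m_j,m_k)<1/n_0$). Using hypothesis (ii) to pass from $m_j,m_k$ to $m_1,m_2$ pointwise in $y$, and applying the definition of $B_{n_0}$, one obtains
\begin{align*}
\|F(m_1,y)-F(m_2,y)\|_Z \leq \varepsilon/M \qquad \text{for $y\in B_{n_0}$,}
\end{align*}
while on $Y\setminus B_{n_0}$ the difference is controlled by $2g(y)$. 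Splitting the integral then yields
\begin{align*}
\|h(m_1)-h(m_2)\|_Z \leq \frac{\varepsilon}{M}\mu(B_{n_0})+2\int_{Y\setminus B_{n_0}}g(y)\mu(dy) \leq \frac{\varepsilon(\mu(Y)+2)}{M}<\varepsilon,
\end{align*}
establishing uniform continuity.

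The principal technical point is the measurability of $B_n$, which is where the separability assumption on $\mathcal{M}$ is essential: without it one could not reduce the supremum defining $B_n$ to a countable operation. Everything else is a routine three-line split plus dominated convergence. Care must also be taken when extending the estimate from the countable dense set back to arbitrary $m_1,m_2$; this extension is legitimate precisely because $F(\cdot,y)$ is (pointwise in $y$) continuous, which is a consequence of hypothesis (ii).
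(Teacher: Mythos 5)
Your argument is correct, and it rests on the same two ingredients as the paper's proof: separability of $\mathcal{M}$, used to reduce a supremum to a countable dense family so that it is measurable in $y$, and the domination $\|F(m,y)-F(m',y)\|_Z\le 2g(y)$ combined with dominated convergence. The mechanics differ, though. The paper works directly with the modulus of continuity $\omega_{F,y}(t):=\sup\{\|F(m,y)-F(m',y)\|_Z : d_{\mathcal{M}}(m,m')\le t\}$, shows it is measurable in $y$ (again via a countable set of pairs), bounded by $2g(y)$, and tends to $0$ for $\mu$-a.e.\ $y$ as $t\to0$, so that $\|h(m)-h(m')\|_Z\le\int_Y\omega_{F,y}(t)\,\mu(dy)\to 0$ in a single application of dominated convergence; this is slightly shorter and produces an explicit modulus of continuity for $h$. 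You instead perform an Egorov-type splitting into the good sets $B_n$ and their small complements, which works equally well but requires the extra $\varepsilon$-threshold bookkeeping. Two minor points in your write-up: the extension from the dense pairs $(m_j,m_k)$ to arbitrary points is legitimate because, for each fixed admissible $y$, you may let the approximating points tend to the given ones and pass to the limit using continuity of $F(\cdot,y)$ (note also the notational clash: $m_1,m_2$ denote both arbitrary points and members of the dense sequence); and with $M=1+\mu(Y)+\int_Y g\,d\mu$ the final bound $\varepsilon(\mu(Y)+2)/M$ need not be $<\varepsilon$ when $\int_Y g\,d\mu<1$ --- harmless, since it is always $\le 2\varepsilon$, but choosing $M=2+\mu(Y)+\int_Y g\,d\mu$ gives the stated inequality literally.
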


\begin{proof}
The boundedness of $h$ is trivial and its continuity follows by the dominated convergence theorem (see \cite[Theorem 3, p.45]{DU77}). So we just need to prove the uniform continuity of $h$. Let $N\subseteq Y$ be such that $\mu(N)=0$ and condition \eqref{Zone400,2} holds for all $y\in Y\setminus N$. For any $y\in Y\setminus N$, consider the modulus of continuity of the map $x\mapsto F(x,y)$ defined as
\begin{align*}
\omega_{F,y}(t):=\sup\set{\|F(m,y)-F(m',y)\|_Z\tc m,m'\in\mathcal{M},\ d_{\mathcal{M}}(m,m')\leq t},\qquad t>0.
\end{align*}
Observe that, by the separability of $\mathcal{M}\times\mathcal{M}$, for every $t>0$ there exists a countable set $D(t)\subseteq \mathcal{M}\times\mathcal{M}$ such that 
\begin{align*}
\omega_{F,y}(t):=\sup\set{\|F(m,y)-F(m',y)\|_Z\tc (m,m')\in D(t),\ d_{\mathcal{M}}(m,m')\leq t},\qquad t>0.
\end{align*}
The countability of $D(t)$ assures the measurability, with respect to $\mu$, of the map $y\mapsto\omega_{F,y}(t)$ for any $y>0$.
Moreover a standard computation gives that for any $y\in Y\setminus N$ and $t>0$ it holds $\omega_{F,y}(t)\leq 2g(y)$. Now let $(t_n)_{n\in\N}$ be a sequence of positive real numbers converging to zero. We have, for any $m,m'\in\X$ with $d_{\mathcal{M}}(m,m')\leq t_n$ 
\begin{align*}
\|h(m)-h(m')\|_Z\leq \int_Y\|F(m,y)-F(m',y)\|_Z\mu(dy)\leq \int_Y\omega_{F,y}(t_n)\mu(y).
\end{align*}
The thesis follows by the dominated convergence theorem (see \cite[Theorem 3, p.45]{DU77}).
\end{proof}

\section*{Declarations}


\subsection*{Fundings} The authors are members of GNAMPA (Gruppo Nazionale per l’Analisi Matematica, la Probabilit\`a
e le loro Applicazioni) of the Italian Istituto Nazionale di Alta Matematica (INdAM). The authors have been also partially supported by the research project PRIN 2015233N5A ``Deterministic and stochastic evolution equations'' of the Italian Ministry of Education, MIUR. S.F has been partially supported by the INdAM-GNAMPA project ``Problemi ellittici e pa\-ra\-bo\-li\-ci in dimensione infinita'' CUP\_E55F22000270001. D.A.B. has been partially supported by the grant ``Stochastic differential equations and associated Markovian semigroups'' of the University of Pavia and by the INdAM-GNAMPA project ``Analisi qualitativa di PDE stocastiche: ergodicit\`a ed equazioni di Kolmogorov'' CUP\_E55F22000270001. The authors have no relevant financial or non-financial interests to disclose.

\subsection*{Research Data Policy and Data Availability Statements} Data sharing not applicable to this article as no datasets were generated or analysed during the current study.

\end{document}